\newtheorem{proof}{Proof}
\newtheorem{assumption}{Assumption}
\newtheorem{theorem}{Theorem}
\newtheorem{lemma}{Lemma}
\newtheorem{remark}{Remark}
\title{\LARGE \bf
A Newton Tracking Algorithm with Exact Linear Convergence Rate for Decentralized Consensus Optimization
}
\author{Jiaojiao Zhang{$^*$}, Qing Ling{$^\dagger$}, and Anthony Man-Cho So{$^*$}
\thanks{Qing Ling is supported in part by NSF China Grants 61573331 and 61973324, and Fundamental Research Funds for the Central Universities.}
\thanks{Jiaojiao Zhang and Anthony Man-Cho So are with Department of Systems Engineering and Engineering Management, The Chinese University of Hong Kong.}%
\thanks{Qing Ling is with School of Data and Computer Science and Guangdong Province Key Laboratory of Computational Science, Sun Yat-Sen University.}%
}
\begin{document}

\maketitle
\thispagestyle{empty}
\pagestyle{empty}

\begin{abstract}
This paper considers the decentralized consensus optimization problem defined over a network where each node holds a second-order differentiable local objective function. Our goal is to minimize the summation of local objective functions and find the exact optimal solution using only local computation and neighboring communication. We propose a novel Newton tracking algorithm, where each node updates its local variable along a local Newton direction modified with neighboring and historical information. We investigate the connections between the proposed Newton tracking algorithm and several existing methods, including gradient tracking and second-order algorithms. Under the strong convexity assumption, we prove that it converges to the exact optimal solution at a linear rate. Numerical experiments demonstrate the efficacy of Newton tracking and validate the theoretical findings.
\end{abstract}

\section{INTRODUCTION}
In this paper, we focus on the decentralized consensus optimization problem defined over an undirected and connected network with $n$ nodes, in the form of
\begin{equation}\label{c1}
{x}^* =\arg\min_{x\in\mathbb{R}^{p}} ~ \sum_{i=1}^{n} f_{i}(x).
\end{equation}
Here, $f_i: \mathbb{R}^p\to \mathbb{R} \cup \{+\infty\}$ is a convex and second-order differentiable function privately owned by node $i$. Every agent aims to obtain an optimal solution ${x}^*$ to \eqref{c1} via local computation and communication with its neighbors. Decentralized consensus optimization problem in the form of \eqref{c1} arises in various applications, such as resource allocation \cite{pu2018push,liu2019decentralized,ta2019lora}, smart grid control \cite{emiliano2013decentralized, liu2019grid}, federate learning \cite{lalitha2018fully,li2019federated,zhao2019mobile}, decentralized machine learning \cite{lian2017fully,koppel2018decentralized,lee2019rl}, and so on.


Decentralized consensus optimization methods have been extensively studied in the literature. Among the first-order methods, a popular algorithm is decentralized gradient descent (DGD) \cite{nedic2009distributed,yuan2016convergence,jakovetic2014fast}. However, DGD has to employ a diminishing step size to obtain an exact optimal solution. With a fixed step size, DGD converges fast but only to a neighborhood of an optimal solution \cite{jakovetic2014fast}. There are other first-order methods which use fixed step sizes but still converge to an exact optimal solution, including EXTRA \cite{shi2015extra}, decentralized ADMM \cite{shi2014linear,ling2015dlm}, exact diffusion \cite{yuan2018exact}, NIDS \cite{li2019decentralized}, and gradient tracking \cite{xu2015augmented,qu2017harnessing,xin2018linear,sun2019convergence,xin2020gradient}. Take gradient tracking as an example, each node maintains a local estimate of global gradient descent direction using neighboring and historical information, and uses it to correct the convergence error in DGD.


Although the first-order algorithms enjoy the advantage of low iteration-wise computational complexity, second-order methods are still attractive due to their faster convergence speeds, and hence lower communication costs. Some works such as \cite{mokhtari2016network,bajovic2017newton,mansoori2019fast} penalize the implicit consensus constraints to the objective function. Hence, these penalized second-order algorithms can employ unconstrained optimization techniques, but only converge to a neighborhood of an optimal solution.  The penalty parameter trade-offs the convergence error and convergence speed. Primal-dual methods are effective to handle this accuracy-speed trade-off. This leads to the second-order methods operating in the primal-dual domain \cite{mokhtari2016dqm,mokhtari2016decentralized,eisen2019primal}, which achieve exact convergence with linear rates.
There exist other second-order methods with superlinear convergence rates under stricter conditions \cite{soori2019dave,zhang2020distributed}. For example, \cite{soori2019dave} proposes the distributed averaged quasi-Newton method for a master-slave network, but the initialization is required to be close enough to an optimal solution so as to guarantee locally superlinear convergence. The work of \cite{zhang2020distributed} runs a finite-time set-consensus inner loop at each iteration of the Polyak's adaptive Newton method, and  achieves globally superlinear convergence.


In this paper, we propose a novel second-order Newton tracking algorithm, in which each node updates its local variable along a local Newton direction modified with neighboring and historical information. As its name suggests, Newton tracking inherits the idea of gradient tracking, but improves its convergence speed through utilizing the second-order information. We investigate the connections between the proposed Newton tracking algorithm and several existing methods, including gradient tracking and second-order algorithms. Under the strong convexity assumption, we prove that it converges to the exact optimal solution at a linear rate. Numerical experiments demonstrate the efficacy of Newton tracking and validate the theoretical findings.

\textbf{Notations.} $\mathbf{I} \in \mathbb{R}^{np \times np}$, $I_n \in \mathbb{R}^{n \times n}$ and $I_p \in \mathbb{R}^{p \times p}$ denote identity matrices with different sizes. $\mathbf{0} \in \mathbb{R}^{np}$ and $0_p \in \mathbb{R}^p$ denote all-zero vectors with different sizes. $1_n \in \mathbb{R}^n$ is an all-one vector. $\lambda_{\max}(\cdot)$, $\lambda_{\min}(\cdot)$ and $\hat{\lambda}_{\min}(\cdot)$ denote the largest, smallest, and smallest nonzero eigenvalues of a matrix, respectively.


\section{Problem Formulation and Algorithm Development}

In this section, we rewrite the decentralized consensus optimization problem \eqref{c1} to an equivalent constrained form, and propose the Newton tracking algorithm to solve it.

\subsection{Problem Formulation}
Consider a bidirectionally connected network of $n$ nodes. Two nodes are neighbors if they are connected with an edge. Define $\mathcal{N}_i$ as the set of neighbors of node $i$ and let ${x}_{i} \in \mathbb{R}^{p}$ be the local copy of decision variable $x$ kept at node $i$. Since the network is bidirectionally connected, the optimization problem in \eqref{c1} is equivalent to
\begin{align}\label{d2}
\left\{{x}_{i}^{*}\right\}_{i=1}^{n}:= \underset{\left\{{x}_{i}\right\}_{i=1}^{n}}{{\arg\min}} & ~ \sum_{i=1}^{n} f_{i}\left({x}_{i}\right), \\\nonumber
\text { s.t. } & ~ {x}_{i}={x}_{j}, ~ \forall j \in \mathcal{N}_{i}, ~ \forall i.
\end{align}
Indeed, the constraint in \eqref{d2} enforces the consensus condition ${x}_{1}=\cdots={x}_{n}$ for any feasible solution of \eqref{d2}. When the consensus condition is
satisfied, the objective functions in \eqref{c1} and \eqref{d2} are equivalent, such that the optimal local variables ${x}_{i}^{*}$ of \eqref{d2} are all equal to the optimal argument $x^*$ of \eqref{c1}, i.e., ${x}_{1}^*=\cdots={x}_{n}^*=x^*$.

\subsection{Algorithm Development}
Let us introduce a nonnegative mixing matrix ${W} \in \mathbb{R}^{n \times n}$ whose $(i,j)$-th element $w_{i j} \geq 0$ represents the weight that node $i$ assigns to node $j$. The weight $w_{ij} = 0$ if and only if $j \notin $ $\mathcal{N}_{i} \cup\{i\}$. The mixing matrix $W$ is further required to satisfy the following assumption.
\begin{assumption}\label{assum1}
The mixing matrix $W$ is symmetric and doubly stochastic, i.e., $W=W^T$ and $W1_n=1_n$. The null space of $I_n-W$ is $\operatorname{span}\left(1_n\right)$.
\end{assumption}

When the underlying network is bidirectionally connected, the mixing matrix $W$ satisfying Assumption \autoref{assum1} can be generated using various techniques, such as those introduced in \cite{boyd2004fastest}. According to the Perron-Frobenius theorem \cite{pillai2005perron}, Assumption \autoref{assum1} means that the eigenvalues of $W$ lie in $(-1,1]$ and $W$ has a single eigenvalue at 1. 

At time $t$ of our proposed Newton tracking algorithm, each node $i$ keeps a local copy $x_i^t\in \mathbb{R}^p$ and a vector $u_i^t\in \mathbb{R}^p$ that estimates the negative Newton direction $u^t$, as $$ u_i^t \approx u^t \triangleq \left(\frac{1}{n}\sum_{i=1}^{n}\nabla^2 f_i(\bar{x}^{t}) \right)^{-1} \left(\frac{1}{n}\sum_{i=1}^{n}\nabla f_i(\bar{x}^{t})\right),$$ where $\bar{x}^t \triangleq \frac{1}{n} \sum_{i=1}^n x_i^t$ is the average of local copies. Each node $i$ updates $x_i^{t+1}$ from $x_i^t$ through descending along the direction $-u_i^t$ with unit step size. Since it is unaffordable to compute the exact Newton direction in a decentralized manner, we propose to estimate the Newton direction by a novel Newton tracking technique.

To be specific, the proposed Newton tracking algorithm starts with $x_i^{0} = 0_p$ and  $u_i^{0} = \left(\nabla^2 f_i(0_p)+\epsilon I_p\right)^{-1}\nabla f_i(0_p)$, then proceeds with
\begin{align}
\label{e3}x_i^{t+1}=&x_i^{t}-u_i^{t}, \\ \label{e3u}
u_i^{t+1}=& (\nabla^2 f_i(x_i^{t+1})+\epsilon I_p)^{-1}\\
\nonumber&\big[(\nabla^2 f_i(x_i^{t})+\epsilon I_p)u_i^{t}+ \nabla f_i(x_i^{t+1})-\nabla f_i(x_i^{t})\\
\nonumber& + 2\alpha(x_i^{t+1}-\sum_{j\in \mathcal{N}_i}w_{ij}x_j^{t+1})-\alpha(x_i^{t}-\sum_{j\in \mathcal{N}_i}w_{ij}x_j^{t})\big],
\end{align}
where $\epsilon>0$ and $\alpha>0$ are parameters. Comparing $-u_i^{t+1}$ with the true Newton direction, we have two observations. (i) The exact global Hessian $\frac{1}{n}\sum_{i=1}^{n}\nabla^2 f_i(\bar{x}^{t+1})$ is replaced by the regularized local Hessian $\nabla^2 f_i(x_i^{t+1})+\epsilon I_p$. The regularization parameter $\epsilon$ is necessary because the local Hessian $\nabla^2 f_i(x_i^{t+1})$ may be unreliable, especially in the beginning stage of the algorithm. (ii) The exact gradient $\frac{1}{n}\sum_{i=1}^{n}\nabla f_i(\bar{x}^{t})$ is replaced by three terms that are locally computable. The first term $(\nabla^2 f_i(x_i^{t})+\epsilon I_p)u_i^{t}$ involves the previous local Hessian and estimated Newton direction. The second term $\nabla f_i(x_i^{t+1})-\nabla f_i(x_i^{t})$ is the difference between the current and previous gradient directions. The third term $2\alpha(x_i^{t+1}-\sum_{j\in \mathcal{N}_i}w_{ij}x_j^{t+1})-\alpha(x_i^{t}-\sum_{j\in \mathcal{N}_i}w_{ij}x_j^{t})$ extrapolates the current and previous consensus errors.

Now we manipulate \eqref{e3u} to better illustrate the idea of Newton tracking. From \eqref{e3u} we have
\begin{align}
\label{e3u-new}
&(\nabla^2 f_i(x_i^{t+1})+\epsilon I_p) u_i^{t+1}\\
\nonumber=& (\nabla^2 f_i(x_i^{t})+\epsilon I_p)u_i^{t}+ \nabla f_i(x_i^{t+1})-\nabla f_i(x_i^{t})\\
\nonumber & + 2\alpha(x_i^{t+1}-\sum_{j\in \mathcal{N}_i}w_{ij}x_j^{t+1})-\alpha(x_i^{t}-\sum_{j\in \mathcal{N}_i}w_{ij}x_j^{t}).
\end{align}
Summing up \eqref{e3u-new} over $i=1,\ldots, n$ and invoking the double stochasticity of $W$,
we have
\begin{align} \label{e3u-new-new}
 &\sum_{i=1}^{n}\left(\nabla^2 f_i(x_i^{t+1})+\epsilon I_p\right)u_i^{t+1} \\
=&\sum_{i=1}^{n}\left(\nabla^2 f_i(x_i^{t})+\epsilon I_p\right)u_i^{t} + \sum_{i=1}^{n}\left(\nabla f_i(x_i^{t+1})-\nabla f_i(x_i^{t})\right). \nonumber
\end{align}
When the algorithm is initialized such that $\sum_{i=1}^{n}\nabla f_i(x_i^{0})$ $=\sum_{i=1}^{n}\left(\nabla^2 f_i(x_i^{0})+\epsilon I_p\right)u_i^{0}$, summing up \eqref{e3u-new-new} from time $0$ to time $t$ yields
$$\sum_{i=1}^{n}\left(\nabla^2 f_i(x_i^{t})+\epsilon I_p\right)u_i^{t}=\sum_{i=1}^{n}\nabla f_i(x_i^{t}).$$
In comparison, the global Newton direction $-u^t$ satisfies
$$\sum_{i=1}^{n}\nabla^2 f_i(\bar{x}^{t}) u^t = \sum_{i=1}^{n}\nabla f_i(\bar{x}^{t}).$$
When the local variable pairs $(x_i^t, u_i^t)$ are similar across the nodes, we observe that $x_i^t$ is close to $\bar{x}^t$ and $u_i^t$ tracks a regularized Newton direction.


The recursion \eqref{e3}-\eqref{e3u} can be written in a compact form. Define $\mathbf{x}\triangleq[{x}_{1} ; \ldots ; {x}_{n}]\in \mathbb{R}^{np}$ and $\mathbf{u}\triangleq[u_{1} ; \ldots ; u_{n}]\in \mathbb{R}^{np}$ as the
stacks of local variables. Define the aggregate function $ f: \mathbb{R}^{n p} \rightarrow \mathbb{R}$ as $f(\mathbf{x})=f({x}_{1}, \cdots, {x}_{n})=\sum_{i=1}^{n} f_{i}({x}_{i})$ that sums up all the local functions $f_{i}\left(x_{i}\right)$. The gradient of  $f(\mathbf{x})$ is $\nabla f(\mathbf{x})=[\nabla f_1({x}_{1}) ; \ldots ; \nabla f_n({x}_{n})]\in \mathbb{R}^{np}$. The Hessian of $f(\mathbf{x})$, denoted by $\nabla^2 f(\mathbf{x}) \in \mathbb{R}^{np\times
np}$, is a block diagonal matrix whose $i$-th diagonal block is $\nabla^2 f_i(x)$. Define $\mathbf{H} \triangleq \nabla^2 f(\mathbf{x})+\epsilon \mathbf{I} \in\mathbb{R}^{np\times
np}$ and $\mathbf{W}\triangleq{W} \otimes I_{p} \in \mathbb{R}^{np\times np} $ as the Kronecker product of the weight matrix $W$ and
the identity matrix $I_p$. The recursion \eqref{e3}-\eqref{e3u} can be written as
\begin{align}
\label{e4}\mathbf{x}^{t+1}=&\mathbf{x}^t-\mathbf{u}^t,\\\label{e4u}
\mathbf{u}^{t+1}=&({\mathbf{H}^{t+1}})^{-1}\big[\mathbf{H}^t \mathbf{u}^t  + \nabla f\left(\mathbf{x}^{t+1}\right)-\nabla f\left(\mathbf{x}^t\right) \\\nonumber
& +\alpha(\mathbf{I} -\mathbf{W})(2\mathbf{x}^{t+1}-\mathbf{x}^t) \big].
\end{align}
The algorithm is initialized as $\mathbf{x}^0 = \mathbf{0}$ and $\mathbf{u}^0 = (\nabla^2 f(\mathbf{0})+\epsilon \mathbf{I})^{-1} \nabla f(\mathbf{0})$.

\section{Connections with Existing Approaches}
This section investigates the connections of the proposed Newton tracking algorithm with several existing approaches, such as gradient tracking and primal-dual methods.

\subsection{Connection with Gradient Tracking}
The recursion of gradient tracking \cite{qu2017harnessing} is given by
\begin{align}
&\mathbf{x}^{t+1}=\mathbf{W} \mathbf{x}^{t}-\alpha \mathbf{y}^{t}, \label{e5} \\
&\mathbf{y}^{t+1}=\mathbf{W} \mathbf{y}^{t}+\nabla {f}(\mathbf{x}^{t+1})-\nabla {f}(\mathbf{x}^{t}), \label{e5y}
\end{align}
where $\mathbf{x},\mathbf{y}\in \mathbb{R}^{np}$. To see the connection between gradient tracking and Newton tracking, we rewrite \eqref{e5}-\eqref{e5y}. First, write $\mathbf{x}^{t+1}=\mathbf{W} \mathbf{x}^{t}-\alpha \mathbf{y}^{t}$ as $\mathbf{x}^{t+1}=\mathbf{x}^{t} - [(\mathbf{I} -\mathbf{W}) \mathbf{x}^{t}+\alpha \mathbf{y}^{t}]$. Then, define $ \mathbf{r}^t= (\mathbf{I}-\mathbf{W}) \mathbf{x}^{t}+\alpha \mathbf{y}^{t} \in\mathbb{R}^{np}$. Replacing $\mathbf{y}$ with $\mathbf{r}$ shows that \eqref{e5}-\eqref{e5y} are equivalent to
\begin{align}
\mathbf{x}^{t+1}=& \mathbf{x}^{t}-\mathbf{r}^{t},\label{e6}\\
\mathbf{r}^{t+1}=&\mathbf{W}\mathbf{r}^{t}+\alpha[\nabla {f}(\mathbf{x}^{t+1})-\nabla {f}(\mathbf{x}^{t})]  \label{e6r} \\\nonumber& +(\mathbf{I} -\mathbf{W})(\mathbf{x}^{t+1}-\mathbf{W}\mathbf{x}^{t}).
\end{align}

Similar to the update of $\mathbf{u}^{t+1}$ in \eqref{e4u}, the update of $\mathbf{r}^{t+1}$ in \eqref{e6r} also involves three parts: the previous direction $\mathbf{r}^{t}$, the difference between current and previous gradient directions $\alpha[\nabla {f}(\mathbf{x}^{t+1})-\nabla {f}(\mathbf{x}^{t})]$, and the combination of current and previous consensus errors $(\mathbf{I} -\mathbf{W})(\mathbf{x}^{t+1}-\mathbf{W}\mathbf{x}^{t})$. The major difference between $\mathbf{u}^{t+1}$ and $\mathbf{r}^{t+1}$ is that the former utilizes the current and previous Hessians, which help improve the convergence speed, especially when the local objective functions are ill-conditioned.

\subsection{Connection with Primal-dual Algorithms}

The proposed Newton tracking algorithm has a primal-dual interpretation. Note that the null space of $I_n-W$ is $\operatorname{span}\left(1_n \right)$, so is the null space of its square root $(I_n-W)^{\frac{1}{2}}$. Because $(\mathbf{I}-\mathbf{W})^{\frac{1}{2}} = (I_n-W)^{\frac{1}{2}} \otimes I_p$, $(\mathbf{I}-\mathbf{W})^{\frac{1}{2}}\mathbf{x}=\mathbf{0}$ if and only if $x_1=\cdots =x_n$.  The optimization problem \eqref{d2} is equivalent to
\begin{align}\label{c5}
\mathbf{x}^*\triangleq \arg\min_\mathbf{x} & ~ f(\mathbf{x}),\\\nonumber
\text{s.t.} & ~ (\mathbf{I} -\mathbf{W})^{\frac{1}{2}}\mathbf{x}=\mathbf{0}.
\end{align}

The augmented Lagrangian $L(\mathbf{x},\mathbf{v})$ of \eqref{c5} is
\begin{align}\label{c6}
\hspace{-1em} L(\mathbf{x},\mathbf{v})=f(\mathbf{x})+\langle \mathbf{v},(\mathbf{I} -\mathbf{W})^{\frac{1}{2}}\mathbf{x} \rangle+\frac{\alpha}{2} \mathbf{x}^T(\mathbf{I} -\mathbf{W})\mathbf{x},
\end{align}
where $\mathbf{v}\in \mathbb{R}^{np}$ is the dual variable. Therefore, the augmented Lagrangian method to solve \eqref{c5} is given by
\begin{align}
\mathbf{x}^{t+1}&=\arg\min_{\mathbf{x}} ~ L(\mathbf{x},\mathbf{v}^k), \label{fajdisfjoas} \\
\mathbf{v}^{t+1}&=\mathbf{v}^t+\alpha(\mathbf{I} -\mathbf{W})^{\frac{1}{2}} \mathbf{x}^{t+1}. \label{6}
\end{align}

However, solving \eqref{fajdisfjoas} is nontrivial. First, $f(\mathbf{x})$ is a general objective function such that \eqref{fajdisfjoas} does not have a closed-form solution. Second, even if $f(\mathbf{x})$ is quadratic, the topology-dependent quadratic term $\frac{\alpha}{2} \mathbf{x}^T(\mathbf{I} -\mathbf{W})\mathbf{x}$ makes the closed-form solution not implementable in a decentralized manner. Motivated by these observations, we quadratically approximate $f(\mathbf{x})$ and linearly approximate $\frac{\alpha}{2} \mathbf{x}^T(\mathbf{I} -\mathbf{W})\mathbf{x}$ both around $\mathbf{x}^t$, and then add a proximal term $\frac{\epsilon}{2}\|\mathbf{x}-\mathbf{x}^t\|^2$ to the objective function of \eqref{fajdisfjoas}. This way, the update of $\mathbf{x}^{t+1}$ is given by the solution to
\begin{align*}
\hspace{-1em}\min_{\mathbf{x}}& ~ \left\langle \triangledown f(\mathbf{x}^t)\!+\!(\mathbf{I} -\mathbf{W})^{\frac{1}{2}}\mathbf{v}^t +\alpha(\mathbf{I} -\mathbf{W})\mathbf{x}^t, \mathbf{x}-\mathbf{x}^t \!\right\rangle\\\nonumber & ~ + \frac{1}{2}(\mathbf{x}-\mathbf{x}^t)^T\triangledown^2 f(\mathbf{x}^t)(\mathbf{x}-\mathbf{x}^t)+\frac{\epsilon}{2}\|\mathbf{x}-\mathbf{x}^t\|^2,
\end{align*}
which is
\begin{align}\label{b5}
&\mathbf{x}^{t+1} = \mathbf{x}^t \\\nonumber
&\hspace{1em}-\left( \mathbf{H}^t \right)^{-1} \left[\nabla f\left(\mathbf{x}^t\right)+(\mathbf{I} -\mathbf{W})^{\frac{1}{2}} \mathbf{v}^t+\alpha(\mathbf{I}-\mathbf{W}) \mathbf{x}^t\right].
\end{align}

Next, we show that \eqref{b5} and \eqref{6} initialized by $\mathbf{x}^0 = \mathbf{0}$ and $\mathbf{v}^0 = \mathbf{0}$ are equivalent to \eqref{e4}-\eqref{e4u} initialized by $\mathbf{x}^0 = \mathbf{0}$ and $\mathbf{u}^0 = (\nabla^2 f(\mathbf{0})+\epsilon \mathbf{I})^{-1} \nabla f(\mathbf{0})$. By \eqref{b5}, the two recursions have the same $\mathbf{x}^1 = -(\nabla^2 f(\mathbf{0})+\epsilon \mathbf{I})^{-1} \nabla f(\mathbf{0})$. Also by \eqref{b5}, we have
\begin{align} \label{b7}
&\mathbf{H}^t\mathbf{x}^{t+1} = \mathbf{H}^t\mathbf{x}^t \\
&\hspace{1em}-\left[\nabla f\left(\mathbf{x}^t\right)+ (\mathbf{I} -\mathbf{W})^{\frac{1}{2}} \mathbf{v}^t+\alpha(\mathbf{I} -\mathbf{W}) \mathbf{x}^t\right], \nonumber \\ \label{b8}
&\mathbf{H}^{t+1}\mathbf{x}^{t+2} =\mathbf{H}^{t+1}\mathbf{x}^{t+1} \\
&\hspace{1em}-\left[\nabla f\left(\mathbf{x}^{t+1}\right)+ (\mathbf{I} -\mathbf{W})^{\frac{1}{2}} \mathbf{v}^{t+1}+\alpha(\mathbf{I} -\mathbf{W}) \mathbf{x}^{t+1}\right], \nonumber \end{align}
Subtracting \eqref{b7} from \eqref{b8} and substituting the dual update \eqref{6} to eliminate the terms $(\mathbf{I} -\mathbf{W})^{\frac{1}{2}} \mathbf{v}^t$ and $(\mathbf{I} -\mathbf{W})^{\frac{1}{2}} \mathbf{v}^{t+1}$, we have
\begin{align*}
&\mathbf{H}^{t+1}\mathbf{x}^{t+2}=\left[\mathbf{H}^t+\mathbf{H}^{t+1}-2\alpha(\mathbf{I} -\mathbf{W})\right]\mathbf{x}^{t+1}\\\nonumber
&-\left[\mathbf{H}^t-\alpha(\mathbf{I} -\mathbf{W})\right]\mathbf{x}^t
-\left[\nabla f\left(\mathbf{x}^{t+1}\right)-\nabla f\left(\mathbf{x}^t\right)\right],
\end{align*}
which is equivalent to
\begin{align}\label{b10}
&\mathbf{H}^{t+1}\mathbf{x}^{t+2}-\left[\mathbf{H}^{t+1}-\alpha(\mathbf{I} -\mathbf{W})\right]\mathbf{x}^{t+1}\\\nonumber
=&\mathbf{H}^t\mathbf{x}^{t+1}-\left[\mathbf{H}^t-\alpha(\mathbf{I} -\mathbf{W})\right]\mathbf{x}^t-\alpha(\mathbf{I} -\mathbf{W})\mathbf{x}^{t+1}\\\nonumber
&-\left[\nabla f\left(\mathbf{x}^{t+1}\right)-\nabla f\left(\mathbf{x}^t\right)\right].
\end{align}

Defining $\mathbf{s}^t\triangleq \mathbf{H}^t\mathbf{x}^{t+1}-\left[\mathbf{H}^t-\alpha(\mathbf{I} -\mathbf{W})\right]\mathbf{x}^t$, we rewrite \eqref{b10} as
\begin{align}\label{b11}
\hspace{-1em}\mathbf{s}^{t+1}=&\mathbf{s}^t-\alpha(\mathbf{I}-\mathbf{W})\mathbf{x}^{t+1}-\left[\nabla f\left(\mathbf{x}^{t+1}\right)-\nabla f\left(\mathbf{x}^t\right)\right].
\end{align}
From the definition of $\mathbf{s}^t$, it holds
\begin{align}\label{b12}
\mathbf{x}^{t+1}&=\mathbf{x}^t-({\mathbf{H}^t})^{-1}\left[\alpha(\mathbf{I} -\mathbf{W})\mathbf{x}^t-\mathbf{s}^t\right].
\end{align}
Further defining $\mathbf{q}^t\triangleq \alpha(\mathbf{I} -\mathbf{W})\mathbf{x}^t-\mathbf{s}^t = \mathbf{H}^t (\mathbf{x}^t-\mathbf{x}^{t+1})$, we rewrite \eqref{b12} and \eqref{b11} as
\begin{align}
\label{b13}\mathbf{x}^{t+1}=&\mathbf{x}^t-({\mathbf{H}^t})^{-1}\mathbf{q}^t,
\\\label{b13q}
\mathbf{q}^{t+1}=&\mathbf{q}^t+ \nabla f\left(\mathbf{x}^{t+1}\right)-\nabla f\left(\mathbf{x}^t\right) \\\nonumber
& +\alpha(\mathbf{I} -\mathbf{W})(2\mathbf{x}^{t+1}-\mathbf{x}^t).
\end{align}
Observe that \eqref{b13}-\eqref{b13q} are equivalent to \eqref{e4}-\eqref{e4u} in the sense of $\mathbf{u}^t = ({\mathbf{H}^t})^{-1}\mathbf{q}^t$.

\begin{remark}
There is an existing primal-dual second-order algorithm called ESOM that also quadratically approximates the augmented Lagrangian when solving \eqref{fajdisfjoas} \cite{mokhtari2016decentralized}. However, unlike the proposed Newton tracking algorithm, ESOM does not linearize the topology-dependent quadratic term $\frac{\alpha}{2} \mathbf{x}^T(\mathbf{I} -\mathbf{W})\mathbf{x}$, which, as we have indicated earlier, makes the closed-form solution not implementable in a decentralized manner. In fact, the primal update of ESOM is given by
\begin{align}\label{d5}
\mathbf{x}^{t+1}=&\mathbf{x}^t-\left(\triangledown^2 f(\mathbf{x})+\alpha (\mathbf{I} -\mathbf{W})+\epsilon \mathbf{I}\right)^{-1}\\&\big[\nabla f\left(\mathbf{x}^t\right)+(\mathbf{I} -\mathbf{W})^{\frac{1}{2}} \mathbf{v}^t+\alpha(\mathbf{I} -\mathbf{W}) \mathbf{x}^t\big]. \nonumber
\end{align}
In \eqref{d5}, computing the inverse of $\triangledown^2 f(\mathbf{x})+\alpha (\mathbf{I} -\mathbf{W})+\epsilon \mathbf{I}$ requires multiple rounds of communication and computation. Therefore, ESOM introduces an inner loop to approximately solve \eqref{d5}, which leads to extra communication and computation costs \cite{mokhtari2016decentralized}.
\end{remark}

\section{convergence analysis}

Since the Newton tracking recursion \eqref{e4}-\eqref{e4u} is equivalent to the primal-dual iteration in \eqref{b5} and \eqref{6}, once we show that the primal-dual iteration in \eqref{b5} and \eqref{6} exhibits a linear convergence rate, then so does the Newton tracking recursion \eqref{e4}-\eqref{e4u}. In the analysis, we need the following assumption.

\begin{assumption}\label{assum2}
The local objective functions $f_i(x_i)$ are twice differentiable. The eigenvalues of Hessians $\nabla^2 f_i(x_i)$ are bounded by positive constants $\mu_f, L_f \in (0,\infty)$, i.e.
\begin{align}\label{b1}
\mu_f I_p \preceq \nabla^{2} f_{i}\left(x_i \right) \preceq L_f I_p,
\end{align}
for all $x_i  \in \mathbb{R}^{p}$ and $i=1, \ldots, n$.
\end{assumption}

The lower bound in \eqref{b1} implies that the local objective functions $f_i(x)$ are strongly convex with constant $\mu_f>0$. The upper bound implies that the local gradients $\nabla f_i(x)$ are Lipschitz continuous with constant $L_f$. Note that the aggregate objective function $\nabla^{2} f(\mathbf{x})$ is a block diagonal matrix whose $i$-th diagonal block is $\nabla^2 f_i(x)$. Therefore, the bounds on the eigenvalues of Hessians $\nabla^2 f_i(x)$  in \eqref{b1} also hold for the aggregate Hessian, i.e.
\begin{align*}
\mu_f \mathbf{I} \preceq \nabla^{2} f(\mathbf{x}) \preceq L_f \mathbf{I},
\end{align*}
for all $\mathbf{x} \in \mathbb{R}^{n p}$. Thus, the aggregate objective function $f(\mathbf{x})$ is also strongly convex with constant $\mu_f$ and its gradients $\nabla f(\mathbf{x})$ are Lipschitz continuous with constant $L_f$.

%
%

Our analysis involves the optimal primal-dual pair $(\mathbf{x}^*,\mathbf{v}^*)$ of \eqref{c5}. According to the KKT condition of \eqref{c5}, we have
\begin{align}\label{12}
\triangledown f(\mathbf{x}^*)+(\mathbf{I}-\mathbf{W})^{\frac{1}{2}}\mathbf{v}^*=\mathbf{0},\\ \label{13}
(\mathbf{I}-\mathbf{W})^{\frac{1}{2}}\mathbf{x}^*=0 \quad \text{or} \quad (\mathbf{I}-\mathbf{W})\mathbf{x}^*=\mathbf{0}.
\end{align}

\begin{lemma}\label{lemma1}
Consider the equivalent Newton tracking iteration in \eqref{b5} and \eqref{6}. The primal-dual iterate satisfies
\begin{align}\label{7}
\nabla f\left(\mathbf{x}^{t+1}\right) &-\nabla f\left(\mathbf{x}^*\right)+(\mathbf{I}-\mathbf{W})^{\frac{1}{2}}\left(\mathbf{v}^{t+1}-\mathbf{v}^*\right)\\\nonumber
&+\epsilon\left(\mathbf{x}^{t+1}-\mathbf{x}^t\right)+\mathbf{e}^t=\mathbf{0},
\end{align}
where $\mathbf{e}^t$ is defined as
\begin{align}\label{8}
\mathbf{e}^t\triangleq  & \nabla f\left(\mathbf{x}^t\right)-\nabla f\left(\mathbf{x}^{t+1}\right)+\nabla^{2} f\left(\mathbf{x}^t\right)\left(\mathbf{x}^{t+1}-\mathbf{x}^t\right)\\\nonumber
&-\alpha(\mathbf{I}-\mathbf{W})(\mathbf{x}^{t+1}-\mathbf{x}^t).
\end{align}
\end{lemma}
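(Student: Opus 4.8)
The plan is to establish \eqref{7} by a direct chain of linear substitutions: start from the primal update \eqref{b5}, clear the inverse Hessian, eliminate the dual variable $\mathbf{v}^t$ in favor of $\mathbf{v}^{t+1}$ using the dual update \eqref{6}, then reorganize the residual to match the definition of $\mathbf{e}^t$ in \eqref{8} and finally invoke the KKT condition \eqref{12} to introduce the optimal pair. No inequalities or estimates are needed; the entire claim is an exact identity, so the work is purely algebraic bookkeeping.

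First I would left-multiply \eqref{b5} by $\mathbf{H}^t$ and expand $\mathbf{H}^t=\nabla^2 f(\mathbf{x}^t)+\epsilon\mathbf{I}$, which rearranges into
\begin{align*}
&\nabla f(\mathbf{x}^t)+\nabla^2 f(\mathbf{x}^t)(\mathbf{x}^{t+1}-\mathbf{x}^t)+\epsilon(\mathbf{x}^{t+1}-\mathbf{x}^t) \\
&\quad +(\mathbf{I}-\mathbf{W})^{\frac{1}{2}}\mathbf{v}^t+\alpha(\mathbf{I}-\mathbf{W})\mathbf{x}^t=\mathbf{0}.
\end{align*}
The decisive step is then to rewrite the dual update \eqref{6} as $(\mathbf{I}-\mathbf{W})^{\frac{1}{2}}\mathbf{v}^t=(\mathbf{I}-\mathbf{W})^{\frac{1}{2}}\mathbf{v}^{t+1}-\alpha(\mathbf{I}-\mathbf{W})\mathbf{x}^{t+1}$ and substitute it above. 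This couples the primal and dual half-steps and, crucially, merges the two consensus terms $\alpha(\mathbf{I}-\mathbf{W})\mathbf{x}^t-\alpha(\mathbf{I}-\mathbf{W})\mathbf{x}^{t+1}$ into the single difference $-\alpha(\mathbf{I}-\mathbf{W})(\mathbf{x}^{t+1}-\mathbf{x}^t)$, which is exactly the last term appearing in $\mathbf{e}^t$.

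To finish, I would add and subtract $\nabla f(\mathbf{x}^{t+1})$. The isolated copy of $\nabla f(\mathbf{x}^{t+1})$ serves as the leading term of \eqref{7}, while the leftover group $\nabla f(\mathbf{x}^t)-\nabla f(\mathbf{x}^{t+1})+\nabla^2 f(\mathbf{x}^t)(\mathbf{x}^{t+1}-\mathbf{x}^t)-\alpha(\mathbf{I}-\mathbf{W})(\mathbf{x}^{t+1}-\mathbf{x}^t)$ is precisely $\mathbf{e}^t$ as defined in \eqref{8}. Subtracting the KKT identity \eqref{12}, namely $\nabla f(\mathbf{x}^*)+(\mathbf{I}-\mathbf{W})^{\frac{1}{2}}\mathbf{v}^*=\mathbf{0}$, then converts the bare gradient and dual terms into the differences $\nabla f(\mathbf{x}^{t+1})-\nabla f(\mathbf{x}^*)$ and $(\mathbf{I}-\mathbf{W})^{\frac{1}{2}}(\mathbf{v}^{t+1}-\mathbf{v}^*)$, delivering \eqref{7} verbatim.

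Since every manipulation is a linear equality, there is no genuine analytical obstacle. The only point demanding care is the dual substitution, where one must track whether $\mathbf{v}^t$ or $\mathbf{v}^{t+1}$ is being used so that the surviving topology-dependent terms collapse into the single $-\alpha(\mathbf{I}-\mathbf{W})(\mathbf{x}^{t+1}-\mathbf{x}^t)$ that $\mathbf{e}^t$ expects; a sign slip there would fail to match the definition in \eqref{8}.
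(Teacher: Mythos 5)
Your proof is correct and follows essentially the same route as the paper's: multiply \eqref{b5} by $\mathbf{H}^t$, eliminate $\mathbf{v}^t$ via the dual update \eqref{6}, and combine the KKT conditions with the definition of $\mathbf{e}^t$. The only cosmetic difference is ordering --- you substitute the dual update before introducing the optimal pair, so the consensus terms collapse to $-\alpha(\mathbf{I}-\mathbf{W})(\mathbf{x}^{t+1}-\mathbf{x}^t)$ directly and only \eqref{12} is needed, whereas the paper introduces $(\mathbf{x}^*,\mathbf{v}^*)$ first and additionally uses \eqref{13} to absorb the term $\alpha(\mathbf{I}-\mathbf{W})\mathbf{x}^{t+1}$.
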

The result in Lemma \autoref{lemma1} shows the relationship of the primal-dual pairs $(\mathbf{x}^t,\mathbf{v}^t)$ and $(\mathbf{x}^{t+1},\mathbf{v}^{t+1})$ with the optimal primal-dual pair $(\mathbf{x}^*,\mathbf{v}^*)$. 
The arguments used in the
proof of Lemma \autoref{lemma1} are similar to ones used in \cite{mokhtari2016decentralized}. 

\begin{proof}
By the definition of $\mathbf{e}^t$, \eqref{b5} can be rewritten as
\begin{align}\label{10}
&{\nabla f\left(\mathbf{x}^{t+1}\right)+(\mathbf{I}-\mathbf{W})^{\frac{1}{2}} \mathbf{v}^t}
+\alpha(\mathbf{I}-{\mathbf{W}}) \mathbf{x}^{t+1}\\\nonumber
& \quad +\epsilon\left(\mathbf{x}^{t+1}-\mathbf{x}^t\right)+\mathbf{e}^t=\mathbf{0}.
\end{align}
Combining \eqref{12} and \eqref{13} with \eqref{10}, we have
\begin{align}\label{14}
&{\nabla f\left(\mathbf{x}^{t+1}\right)-\nabla f\left(\mathbf{x}^*\right)+(\mathbf{I}-\mathbf{W})^{\frac{1}{2}}\left(\mathbf{v}^t-\mathbf{v}^*\right)} \\\nonumber
&\quad +\alpha(\mathbf{I}-\mathbf{W})\left(\mathbf{x}^{t+1}-\mathbf{x}^*\right)+\epsilon\left(\mathbf{x}^{t+1}-\mathbf{x}^t\right)+\mathbf{e}^t=\mathbf{0}.
\end{align}
Observe that $\mathbf{v}^t$ in \eqref{14} can be further replaced by $\mathbf{v}^{t+1}$. To be specific, substituting \eqref{13} into \eqref{6} and then regrouping terms, we know that $\mathbf{v}^t$ can be represented as
\begin{align}\label{15}
\mathbf{v}^t=\mathbf{v}^{t+1}-\alpha(\mathbf{I}-\mathbf{W})^{\frac{1}{2}}\left(\mathbf{x}^{t+1}-\mathbf{x}^*\right).
\end{align}
Substituting \eqref{15} into \eqref{14} yields \eqref{7}.
\end{proof}

Observe that the term $\mathbf{e}^t$ can be interpreted as the error introduced by approximation at time $t$, which motivates us to find an upper bound for $\|\mathbf{e}^t\|$. In the following lemma, we provide an upper bound for $\left\|\mathbf{e}^t\right\|$ in terms of $\left\|\mathbf{x}^{t+1}-\mathbf{x}^t\right\|$.

\begin{lemma}\label{lemma2}
Consider the equivalent Newton tracking iteration in \eqref{b5} and \eqref{6}, and recall the definition of the error vector $\mathbf{e}^t$ in \eqref{8}. If Assumption	\autoref{assum2} holds, then $\|\mathbf{e}^t\|$ is bounded by
\begin{align}\label{c28}
\left\|\mathbf{e}^t\right\| \leq \kappa \left\|\mathbf{x}^{t+1}-\mathbf{x}^t\right\|.
\end{align}
where $\kappa \triangleq 2L_f+\alpha \lambda_{\max}(\mathbf{I}-\mathbf{W})$.
\end{lemma}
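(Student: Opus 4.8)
The plan is to obtain the bound \eqref{c28} by a direct triangle-inequality decomposition of the error vector $\mathbf{e}^t$ defined in \eqref{8}, controlling each resulting piece by either Assumption \autoref{assum2} or the spectral properties of $\mathbf{I}-\mathbf{W}$ guaranteed by Assumption \autoref{assum1}. The structure of the target constant $\kappa = 2L_f + \alpha\lambda_{\max}(\mathbf{I}-\mathbf{W})$ already signals the intended grouping: one factor $L_f$ will come from the gradient difference, a second factor $L_f$ from the Hessian--step product, and the term $\alpha\lambda_{\max}(\mathbf{I}-\mathbf{W})$ from the consensus correction.

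First I would split the right-hand side of \eqref{8} into three pieces and apply the triangle inequality to write
\begin{align*}
\|\mathbf{e}^t\| \le \|\nabla f(\mathbf{x}^{t+1})-\nabla f(\mathbf{x}^t)\| + \|\nabla^2 f(\mathbf{x}^t)(\mathbf{x}^{t+1}-\mathbf{x}^t)\| + \alpha\|(\mathbf{I}-\mathbf{W})(\mathbf{x}^{t+1}-\mathbf{x}^t)\|.
\end{align*}
For the first term, I would invoke the $L_f$-Lipschitz continuity of $\nabla f$ (the upper bound in Assumption \autoref{assum2}, equivalently $\nabla^2 f(\mathbf{x})\preceq L_f\mathbf{I}$) to get $\|\nabla f(\mathbf{x}^{t+1})-\nabla f(\mathbf{x}^t)\|\le L_f\|\mathbf{x}^{t+1}-\mathbf{x}^t\|$. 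For the second term, the same eigenvalue bound $\nabla^2 f(\mathbf{x}^t)\preceq L_f\mathbf{I}$ means the operator norm of the (block-diagonal, symmetric) matrix $\nabla^2 f(\mathbf{x}^t)$ is at most $L_f$, hence $\|\nabla^2 f(\mathbf{x}^t)(\mathbf{x}^{t+1}-\mathbf{x}^t)\|\le L_f\|\mathbf{x}^{t+1}-\mathbf{x}^t\|$.

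For the third term, I would use that $\mathbf{I}-\mathbf{W}=(I_n-W)\otimes I_p$ is symmetric and, by Assumption \autoref{assum1} together with the Perron--Frobenius consequence that the eigenvalues of $W$ lie in $(-1,1]$, positive semidefinite with eigenvalues in $[0,2)$. Consequently its operator norm coincides with its largest eigenvalue, giving $\alpha\|(\mathbf{I}-\mathbf{W})(\mathbf{x}^{t+1}-\mathbf{x}^t)\|\le \alpha\lambda_{\max}(\mathbf{I}-\mathbf{W})\|\mathbf{x}^{t+1}-\mathbf{x}^t\|$. Summing the three estimates yields $\|\mathbf{e}^t\|\le(2L_f+\alpha\lambda_{\max}(\mathbf{I}-\mathbf{W}))\|\mathbf{x}^{t+1}-\mathbf{x}^t\|=\kappa\|\mathbf{x}^{t+1}-\mathbf{x}^t\|$, as claimed.

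This argument is essentially routine, so I do not expect a genuine obstacle; the only point requiring care is the justification that the operator norm of $\mathbf{I}-\mathbf{W}$ equals $\lambda_{\max}(\mathbf{I}-\mathbf{W})$, which rests on its symmetric positive semidefiniteness rather than on any crude bound. I would note in passing that the quantity $\nabla^2 f(\mathbf{x}^t)(\mathbf{x}^{t+1}-\mathbf{x}^t)$ is not combined with the gradient difference into a single "second-order Taylor remainder'' estimate (which would instead produce a Lipschitz-Hessian constant); the looser but assumption-minimal bound $2L_f$ is exactly what the stated $\kappa$ reflects, and it is the reason Assumption \autoref{assum2} need only bound the Hessian rather than assume Lipschitz continuity of the Hessian.
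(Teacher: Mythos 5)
Your proposal is correct and follows essentially the same route as the paper's own proof: a triangle-inequality split of $\mathbf{e}^t$ into the gradient-difference, Hessian--step, and consensus terms, each bounded via Assumption \autoref{assum2} (giving the two $L_f$ factors) and the largest eigenvalue of $\mathbf{I}-\mathbf{W}$. Your added care in justifying that the operator norm of the symmetric positive semidefinite matrix $\mathbf{I}-\mathbf{W}$ equals $\lambda_{\max}(\mathbf{I}-\mathbf{W})$ is a welcome refinement (the paper states it without comment, and in fact drops the factor $\alpha$ in its intermediate inequality, though not in the final constant $\kappa$), but it does not change the argument.
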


\begin{proof}
By the triangle inequality, $\|\mathbf{e}^t\|$ is bounded by
\begin{align}\label{25}
& \|\mathbf{e}^t\| \leq \| \nabla f\left(\mathbf{x}^t\right) -\nabla f\left(\mathbf{x}^{t+1}\right) \| \\
& +\|\nabla^{2} f\left(\mathbf{x}^t\right)\left(\mathbf{x}^{t+1}-\mathbf{x}^t\right)\| + \| \alpha(\mathbf{I}-\mathbf{W})(\mathbf{x}^{t+1}-\mathbf{x}^t) \|. \nonumber
\end{align}
By Assumption \autoref{assum2}, $\| \nabla f\left(\mathbf{x}^t\right) -\nabla f\left(\mathbf{x}^{t+1}\right) \| \leq L_f \|\mathbf{x}^{t+1}-\mathbf{x}^t\|$. As the largest eigenvalue of $\nabla^{2} f\left(\mathbf{x}^t\right)$ and $\mathbf{I}-\mathbf{W}$ are $L_f$ and $\lambda_{\max}(\mathbf{I}-\mathbf{W})$, respectively, we know $\|\nabla^{2} f\left(\mathbf{x}^t\right) \left(\mathbf{x}^{t+1}\right.$ $\left.-\mathbf{x}^t\right)\| \leq L_f \|\mathbf{x}^{t+1}-\mathbf{x}^t\|$ and $\| \alpha(\mathbf{I}-\mathbf{W})(\mathbf{x}^{t+1}-\mathbf{x}^t) \| \leq \lambda_{\max}(\mathbf{I}-\mathbf{W}) \|\mathbf{x}^{t+1}-\mathbf{x}^t\|$. Substituting these inequalities into \eqref{25} completes the proof.
\end{proof}

The result in \eqref{c28} demonstrates that the error $\mathbf{e}^t$ introduced by approximation becomes zero as the sequence of iterates $\mathbf{x}^t$ approaches the optimal solution $\mathbf{x}^*$, which will be shown in \autoref{theom1}.

Given the preliminary results in Lemmas \autoref{lemma1} and \autoref{lemma2}, we are ready to establish the linear convergence of the proposed Newton tracking method. To do so, we define vectors $\mathbf{\zeta}, \mathbf{\zeta}^* \in \mathbb{R}^{2np}$ and a matrix $\mathbf{G}\in \mathbb{R}^{np\times np}$ as
$$\mathbf{\zeta}^t=\left[\begin{array}{l}
{\mathbf{x}^t} \\
{\mathbf{v}^t}
\end{array}\right], ~ \mathbf{\zeta}^*=\left[\begin{array}{l}
{\mathbf{x}^*} \\
{\mathbf{v}^*}
\end{array}\right], ~ \mathbf{G}=\left[\begin{array}{cc}
\mathbf{Q} & \mathbf{0} \\
\mathbf{0} & {\frac{1}{\alpha} \mathbf{I}}
\end{array}\right],$$
where $\mathbf{Q}\triangleq \epsilon \mathbf{I}-\alpha (\mathbf{I}-\mathbf{W})$. Note that $\mathbf{Q}$ is positive definite when $\epsilon - \alpha \lambda_{\max}(\mathbf{I}-\mathbf{W}) > 0$. In the following theorem, we show that the sequence $\|\mathbf{\zeta}^t-\mathbf{\zeta}^*\|_{\mathbf{G}}$  converges to zero at a linear rate.
\begin{theorem}\label{theom1}
Consider the equivalent Newton tracking iteration in \eqref{b5} and \eqref{6}. Suppose that the parameters $\epsilon$ and $\alpha$ satisfy $\lambda_{\min}(\mathbf{Q}) = \epsilon-\alpha \lambda_{\max}(\mathbf{I}-\mathbf{W}) > \frac{4L_f^2}{\mu_f}$. Then, the sequence of $\left\|\mathbf{\zeta}^t-\mathbf{\zeta}^*\right\|_{\mathbf{G}}^{2}$ satisfies
\begin{align}\label{26}
\left\|\mathbf{\zeta}^{t+1}-\mathbf{\zeta}^*\right\|_{\mathbf{G}}^{2} \leq \frac{1}{1+{\delta}^{\prime}}\left\|\mathbf{\zeta}^t-\mathbf{\zeta}^*\right\|_{\mathbf{G}}^{2},
\end{align}
where
\begin{align}\label{xx}
{\delta}^{\prime} = & \min\left\{\frac{\mu_f\delta}{(1+\delta)\left[ \epsilon +\frac{\beta\phi L_f^2}{{\alpha \hat{\lambda}_{\min}(\mathbf{I}-\mathbf{W})}}\right]},\right. \nonumber \\& \left.\frac{\alpha \delta^2(\epsilon-\alpha \lambda_{\max}(\mathbf{I}-\mathbf{W})){ \hat{\lambda}_{\min}(\mathbf{I}-\mathbf{W})}}{\frac{\beta\epsilon^2}{(\beta-1)}+\frac{\beta\phi(2L_f+\alpha \lambda_{\max}(\mathbf{I}-\mathbf{W}))^2}{(\phi-1)}}\right\}.
\end{align}
Therein, $\beta> 1$ and $ \phi> 1$ are arbitrary constants, and
\begin{equation}
\delta \triangleq 1-\frac{4L_f^2}{\mu_f {\lambda_{\min }({\mathbf{Q}})} } = 1-\frac{4L_f^2}{\mu_f (\epsilon-\alpha \lambda_{\max}(\mathbf{I}-\mathbf{W}))}>0. \nonumber
\end{equation}
\end{theorem}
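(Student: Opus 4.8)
The plan is to establish the equivalent one-step contraction $\|\zeta^t-\zeta^*\|_{\mathbf G}^2 - \|\zeta^{t+1}-\zeta^*\|_{\mathbf G}^2 \geq {\delta}^{\prime}\|\zeta^{t+1}-\zeta^*\|_{\mathbf G}^2$, from which \eqref{26} follows by rearrangement. The starting point is the optimality relation \eqref{7} of Lemma \autoref{lemma1}. First I would take its inner product with $2(\mathbf x^{t+1}-\mathbf x^*)$ and process the dual term using the update \eqref{6} together with $(\mathbf I-\mathbf W)^{\frac12}\mathbf x^*=\mathbf 0$ from \eqref{13}: since $\mathbf v^{t+1}-\mathbf v^t=\alpha(\mathbf I-\mathbf W)^{\frac12}\mathbf x^{t+1}=\alpha(\mathbf I-\mathbf W)^{\frac12}(\mathbf x^{t+1}-\mathbf x^*)$, the quantity $\langle(\mathbf I-\mathbf W)^{\frac12}(\mathbf v^{t+1}-\mathbf v^*),\mathbf x^{t+1}-\mathbf x^*\rangle$ collapses, via the three-point identity, into the telescoping difference $\tfrac1\alpha(\|\mathbf v^{t+1}-\mathbf v^*\|^2-\|\mathbf v^t-\mathbf v^*\|^2)$ plus the nonnegative remainder $\tfrac1\alpha\|\mathbf v^{t+1}-\mathbf v^t\|^2$. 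This recovers the dual block of the $\mathbf G$-norm.

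The delicate step is to make the primal block appear in the $\mathbf Q$-weighted form rather than merely in $\epsilon\mathbf I$. To this end I would regroup \eqref{8}: writing $\mathbf e^t=\tilde{\mathbf e}^t-\alpha(\mathbf I-\mathbf W)(\mathbf x^{t+1}-\mathbf x^t)$ with $\tilde{\mathbf e}^t\triangleq\nabla f(\mathbf x^t)-\nabla f(\mathbf x^{t+1})+\nabla^2 f(\mathbf x^t)(\mathbf x^{t+1}-\mathbf x^t)$, the term $\epsilon(\mathbf x^{t+1}-\mathbf x^t)$ of \eqref{7} merges with the $\alpha(\mathbf I-\mathbf W)$ piece of $\mathbf e^t$ into $\mathbf Q(\mathbf x^{t+1}-\mathbf x^t)$. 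The three-point identity in the $\mathbf Q$-inner product then turns $2\langle\mathbf Q(\mathbf x^{t+1}-\mathbf x^t),\mathbf x^{t+1}-\mathbf x^*\rangle$ into $\|\mathbf x^{t+1}-\mathbf x^*\|_{\mathbf Q}^2-\|\mathbf x^t-\mathbf x^*\|_{\mathbf Q}^2+\|\mathbf x^{t+1}-\mathbf x^t\|_{\mathbf Q}^2$, supplying exactly the primal block. Collecting everything yields the clean identity
\[
\|\zeta^t-\zeta^*\|_{\mathbf G}^2-\|\zeta^{t+1}-\zeta^*\|_{\mathbf G}^2 = 2\langle\nabla f(\mathbf x^{t+1})-\nabla f(\mathbf x^*),\mathbf x^{t+1}-\mathbf x^*\rangle+\tfrac1\alpha\|\mathbf v^{t+1}-\mathbf v^t\|^2+\|\mathbf x^{t+1}-\mathbf x^t\|_{\mathbf Q}^2+2\langle\tilde{\mathbf e}^t,\mathbf x^{t+1}-\mathbf x^*\rangle,
\]
in which the first three terms are nonnegative (the first by strong convexity, the third because $\mathbf Q\succ0$ under the hypothesis). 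By the argument of Lemma \autoref{lemma2}, $\|\tilde{\mathbf e}^t\|\le 2L_f\|\mathbf x^{t+1}-\mathbf x^t\|$, so the only indefinite term is controlled by Young's inequality. Splitting $2\mu_f\|\mathbf x^{t+1}-\mathbf x^*\|^2$ and $\lambda_{\min}(\mathbf Q)\|\mathbf x^{t+1}-\mathbf x^t\|^2$ against $2\langle\tilde{\mathbf e}^t,\mathbf x^{t+1}-\mathbf x^*\rangle$ with the weight that makes $\delta=1-\tfrac{4L_f^2}{\mu_f\lambda_{\min}(\mathbf Q)}$ emerge gives the sufficient-decrease bound $\|\zeta^t-\zeta^*\|_{\mathbf G}^2-\|\zeta^{t+1}-\zeta^*\|_{\mathbf G}^2\ge A\|\mathbf x^{t+1}-\mathbf x^*\|^2+B\|\mathbf x^{t+1}-\mathbf x^t\|^2$, where one may take $A=\tfrac{\mu_f\delta}{1+\delta}$ and $B=\delta^2\lambda_{\min}(\mathbf Q)$, both positive precisely when $\lambda_{\min}(\mathbf Q)>\tfrac{4L_f^2}{\mu_f}$.

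To convert sufficient decrease into linear convergence, I would upper bound $\|\zeta^{t+1}-\zeta^*\|_{\mathbf G}^2$ by the same two quantities. The primal part obeys $\|\mathbf x^{t+1}-\mathbf x^*\|_{\mathbf Q}^2\le\epsilon\|\mathbf x^{t+1}-\mathbf x^*\|^2$ since $\mathbf Q\preceq\epsilon\mathbf I$. For the dual part, the key structural fact is that $\mathbf v^{t+1}-\mathbf v^*$ lies in the range of $(\mathbf I-\mathbf W)^{\frac12}$: the iterates stay there by induction from $\mathbf v^0=\mathbf 0$ and the update \eqref{6}, and $\mathbf v^*$ can be taken as the range-space KKT multiplier of \eqref{12}. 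Hence $\hat\lambda_{\min}(\mathbf I-\mathbf W)\|\mathbf v^{t+1}-\mathbf v^*\|^2\le\|(\mathbf I-\mathbf W)^{\frac12}(\mathbf v^{t+1}-\mathbf v^*)\|^2$, and \eqref{7} identifies the right-hand side with $\|\nabla f(\mathbf x^{t+1})-\nabla f(\mathbf x^*)+\epsilon(\mathbf x^{t+1}-\mathbf x^t)+\mathbf e^t\|^2$. Two rounds of Young's inequality with the free parameters $\beta,\phi>1$ — peeling off first $\epsilon(\mathbf x^{t+1}-\mathbf x^t)$ and then $\mathbf e^t$ — followed by $\|\nabla f(\mathbf x^{t+1})-\nabla f(\mathbf x^*)\|\le L_f\|\mathbf x^{t+1}-\mathbf x^*\|$ and the full Lemma \autoref{lemma2} bound $\|\mathbf e^t\|\le\kappa\|\mathbf x^{t+1}-\mathbf x^t\|$ produce $\|\zeta^{t+1}-\zeta^*\|_{\mathbf G}^2\le C\|\mathbf x^{t+1}-\mathbf x^*\|^2+D\|\mathbf x^{t+1}-\mathbf x^t\|^2$ with $C=\epsilon+\tfrac{\beta\phi L_f^2}{\alpha\hat\lambda_{\min}(\mathbf I-\mathbf W)}$ and $D=\tfrac{1}{\alpha\hat\lambda_{\min}(\mathbf I-\mathbf W)}\big(\tfrac{\beta\epsilon^2}{\beta-1}+\tfrac{\beta\phi\kappa^2}{\phi-1}\big)$, where $\kappa=2L_f+\alpha\lambda_{\max}(\mathbf I-\mathbf W)$.

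Finally, combining the two bounds termwise gives $\|\zeta^t-\zeta^*\|_{\mathbf G}^2-\|\zeta^{t+1}-\zeta^*\|_{\mathbf G}^2\ge\min\{A/C,\,B/D\}\,\|\zeta^{t+1}-\zeta^*\|_{\mathbf G}^2$, and substituting $A,B,C,D$ (with $\lambda_{\min}(\mathbf Q)=\epsilon-\alpha\lambda_{\max}(\mathbf I-\mathbf W)$) reproduces exactly the two entries of the minimum defining ${\delta}^{\prime}$ in \eqref{xx}. I expect the main obstacle to be the regrouping in the second paragraph: one must resist expanding the $(\mathbf I-\mathbf W)$ cross terms naively, which leaves an indefinite $-\alpha(\mathbf x^t-\mathbf x^*)^T(\mathbf I-\mathbf W)(\mathbf x^t-\mathbf x^*)$ with no counterpart, and instead assemble them into the positive-definite $\|\mathbf x^{t+1}-\mathbf x^t\|_{\mathbf Q}^2$. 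This same assembly is what forces the decrease step to see only the $2L_f$ Hessian-mismatch error $\tilde{\mathbf e}^t$ while the dual bound retains the full $\kappa$ — the asymmetry responsible for the two different constants inside \eqref{xx}. The secondary subtlety is justifying the range-space membership of $\mathbf v^{t+1}-\mathbf v^*$ needed for the $\hat\lambda_{\min}(\mathbf I-\mathbf W)$ step.
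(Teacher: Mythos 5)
Your proof is correct and reproduces the paper's constants exactly: your $A=\frac{\mu_f\delta}{1+\delta}$, $B=\delta^2\lambda_{\min}(\mathbf{Q})$, $C=\epsilon+\frac{\beta\phi L_f^2}{\alpha\hat{\lambda}_{\min}(\mathbf{I}-\mathbf{W})}$ and $D=\frac{1}{\alpha\hat{\lambda}_{\min}(\mathbf{I}-\mathbf{W})}\left(\frac{\beta\epsilon^2}{\beta-1}+\frac{\beta\phi\kappa^2}{\phi-1}\right)$ are precisely the coefficients appearing in the paper's \eqref{a56}, \eqref{a58} and \eqref{faeftrewes}, so $\delta'=\min\{A/C,B/D\}$ is \eqref{xx}. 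Your third paragraph (bounding $\|\mathbf{v}^{t+1}-\mathbf{v}^*\|^2$ through \eqref{7}, the two Young parameters $\beta,\phi$, the range-space step, and $\lambda_{\max}(\mathbf{Q})=\epsilon$) coincides with the paper's Step 3; in fact your induction from $\mathbf{v}^0=\mathbf{0}$ justifying that $\mathbf{v}^{t+1}-\mathbf{v}^*$ lies in the column space of $(\mathbf{I}-\mathbf{W})^{\frac{1}{2}}$ is spelled out more carefully than in the paper, which asserts it without proof. The genuine difference is how the sufficient-decrease inequality is obtained. The paper re-derives it from the update \eqref{b5}: inner product with $\mathbf{x}^*-\mathbf{x}^{t+1}$, strong convexity invoked through function values, a separate dual identity built from \eqref{6}, and then the Lagrangian saddle-point inequality \eqref{a47} together with the orthogonality \eqref{a48} to cancel the cross terms. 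You instead let Lemma~\ref{lemma1} carry everything: folding the $-\alpha(\mathbf{I}-\mathbf{W})(\mathbf{x}^{t+1}-\mathbf{x}^t)$ part of $\mathbf{e}^t$ into $\mathbf{Q}(\mathbf{x}^{t+1}-\mathbf{x}^t)$, pairing \eqref{7} with $2(\mathbf{x}^{t+1}-\mathbf{x}^*)$, and using monotonicity of the strongly convex gradient, $\langle\nabla f(\mathbf{x}^{t+1})-\nabla f(\mathbf{x}^*),\mathbf{x}^{t+1}-\mathbf{x}^*\rangle\ge\mu_f\|\mathbf{x}^{t+1}-\mathbf{x}^*\|^2$; since the KKT relation \eqref{12} is already baked into \eqref{7}, the Lagrangian never appears. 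This is a cleaner, single-identity route, and it even yields the stronger coefficient $2\mu_f$ where the paper's function-value argument yields $\mu_f$ (your subsequent relaxation to the paper's $A$ is legitimate since $\frac{\mu_f(1+2\delta)}{1+\delta}\ge\frac{\mu_f\delta}{1+\delta}$); what the paper's route buys is that it never needs $\nabla f(\mathbf{x}^*)$ or gradient monotonicity, only first-order convexity and the saddle point. Your closing observation on the asymmetry --- the decrease step only sees the Hessian-mismatch error $\tilde{\mathbf{e}}^t$ with constant $2L_f$, while the dual bound retains the full $\mathbf{e}^t$ with constant $\kappa$ --- is exactly how the paper splits \eqref{fadfasdfsdddd} versus \eqref{a58-tttt}.
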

\vspace{1em}

\begin{proof}
\textbf{Step 1.} By reorganizing \eqref{b5}, we get
\begin{align*}
&\epsilon(\mathbf{x}^t-\mathbf{x}^{t+1}) +\triangledown^2 f(\mathbf{x}^t)\left(\mathbf{x}^{t}-\mathbf{x}^{t+1}\right)\\\nonumber
&-\left[ \nabla f\left(\mathbf{x}^t\right)+(\mathbf{I}-\mathbf{W})^{\frac{1}{2}} \mathbf{v}^t+\alpha(\mathbf{I}-\mathbf{W}) \mathbf{x}^t \right]=\mathbf{0}.
\end{align*}
Thus, it holds
\begin{align}\label{a39}
&\left\langle \mathbf{x}^*-\mathbf{x}^{t+1}, \epsilon(\mathbf{x}^t-\mathbf{x}^{t+1}) + \triangledown^2 f(\mathbf{x}^t)\left(\mathbf{x}^t-\mathbf{x}^{t+1}\right)\right.\\&\left.
-\left[ \nabla f\left(\mathbf{x}^t\right)+(\mathbf{I}-\mathbf{W})^{\frac{1}{2}} \mathbf{v}^t+\alpha(\mathbf{I}-\mathbf{W}) \mathbf{x}^t \right]\right\rangle = {0}.\nonumber
\end{align}
Substituting the dual update $\mathbf{v}^t=\mathbf{v}^{t+1}-\alpha(\mathbf{I}-\mathbf{W})^{\frac{1}{2}} \mathbf{x}^{t+1}$ and regrouping the terms, we can rewrite \eqref{a39} to
\begin{align}\label{a41}
&\left\langle \mathbf{x}^*-\mathbf{x}^{t+1}, \underbrace{\left( \epsilon \mathbf{I}  -\alpha(\mathbf{I}-\mathbf{W}) \right)}_{\triangleq \mathbf{Q}}(\mathbf{x}^t-\mathbf{x}^{t+1})\right\rangle\\
&-\left\langle \mathbf{x}^*-\mathbf{x}^{t+1},
\nabla f\left(\mathbf{x}^t\right) \right\rangle -\left\langle \mathbf{x}^*-\mathbf{x}^{t+1},
(\mathbf{I}-\mathbf{W})^{\frac{1}{2}} \mathbf{v}^{t+1}\right\rangle \nonumber\\
&+\left\langle \mathbf{x}^*-\mathbf{x}^{t+1}, \triangledown^2 f(\mathbf{x}^t) (\mathbf{x}^t-\mathbf{x}^{t+1}) \right\rangle
= {0}.\nonumber
\end{align}
For the first term at the left-hand side of \eqref{a41}, we have
\begin{align}\label{a42}
 &\left\langle \mathbf{x}^*-\mathbf{x}^{t+1}, \mathbf{Q} (\mathbf{x}^t-\mathbf{x}^{t+1})\right\rangle\\
=&\frac{1}{2}\big(\|\mathbf{x}^*-\mathbf{x}^{t+1}\|_{\mathbf{Q}}^2+\|\mathbf{x}^t-\mathbf{x}^{t+1}\|_{\mathbf{Q}}^2-\|\mathbf{x}^*-\mathbf{x}^t\|_{\mathbf{Q}}^2\big). \nonumber \end{align}
For the second term at the left-hand side of \eqref{a41}, according to the $\mu_f$-strong convexity of $f$, we have
\begin{align}\label{a43}
&\left\langle \mathbf{x}^*-\mathbf{x}^{t+1}, \nabla f\left(\mathbf{x}^t\right)\right\rangle \\
=&\left\langle \mathbf{x}^*-\mathbf{x}^{t+1}, \nabla f\left(\mathbf{x}^{t+1}\right)\right\rangle \nonumber \\
&+\left\langle \mathbf{x}^*-\mathbf{x}^{t+1}, \nabla f\left(\mathbf{x}^t\right)-\nabla f\left(\mathbf{x}^{t+1}\right)\right\rangle \nonumber\\
\leq & f(\mathbf{x}^*)-f\left(\mathbf{x}^{t+1}\right)-\frac{\mu_{f}}{2}\left\|\mathbf{x}^*-\mathbf{x}^{t+1}\right\|^{2} \nonumber \\
&+\left\langle \mathbf{x}^*-\mathbf{x}^{t+1}, \nabla f\left(\mathbf{x}^t\right)-\nabla f\left(\mathbf{x}^{t+1}\right)\right\rangle. \nonumber
\end{align}
Substituting \eqref{a43} and \eqref{a42} into \eqref{a41}, we get
\begin{align}\label{a44}
& \frac{1}{2}\big(\|\mathbf{x}^*-\mathbf{x}^{t+1}\|_{\mathbf{Q}}^2+\|\mathbf{x}^t-\mathbf{x}^{t+1}\|_{\mathbf{Q}}^2-\|\mathbf{x}^*-\mathbf{x}^t\|_{\mathbf{Q}}^2\big) \\
\nonumber -&f(\mathbf{x}^*)+f\left(\mathbf{x}^{t+1}\right)+\frac{\mu_{f}}{2}\left\|\mathbf{x}^*-\mathbf{x}^{t+1}\right\|^{2}  \\\nonumber
+&\left\langle \mathbf{x}^*-\mathbf{x}^{t+1}, \nabla f\left(\mathbf{x}^{t+1}\right)-\nabla f\left(\mathbf{x}^t\right)+ \triangledown^2 f(\mathbf{x}^t) (\mathbf{x}^t-\mathbf{x}^{t+1})  \right\rangle\\
-&\left\langle \mathbf{x}^*-\mathbf{x}^{t+1},(\mathbf{I}-\mathbf{W})^{\frac{1}{2}} \mathbf{v}^{t+1}\right\rangle\le0. \nonumber
\end{align}
After being regrouped, \eqref{a44} becomes
\begin{align}\label{a45}
& \underbrace{f(\mathbf{x}^*)-f\left(\mathbf{x}^{t+1}\right)}_{(\romannumeral1)}+\underbrace{\left\langle \mathbf{x}^*-\mathbf{x}^{t+1},(\mathbf{I}-\mathbf{W})^{\frac{1}{2}} \mathbf{v}^{t+1}\right\rangle}_{(\romannumeral2)}\\\nonumber
&- \frac{1}{2}\big(\|\mathbf{x}^*-\mathbf{x}^{t+1}\|_{\mathbf{Q}}^2-\|\mathbf{x}^*-\mathbf{x}^t\|_{\mathbf{Q}}^2\big)\\\nonumber
\ge &\frac{1}{2}\|\mathbf{x}^t-\mathbf{x}^{t+1}\|_{\mathbf{Q}}^2 +\frac{\mu_{f}}{2}\left\|\mathbf{x}^*-\mathbf{x}^{t+1}\right\|^{2}\\\nonumber
+&\left\langle \mathbf{x}^*-\mathbf{x}^{t+1}, \nabla f\left(\mathbf{x}^{t+1}\right)-\nabla f\left(\mathbf{x}^t\right)+ \triangledown^2 f(\mathbf{x}^t) (\mathbf{x}^t-\mathbf{x}^{t+1})  \right\rangle. \nonumber
\end{align}

\textbf{Step 2.} We proceed to simplify \eqref{a45}. According to the dual update \eqref{6}, $\mathbf{v}^{t+1}=\mathbf{v}^t+\alpha(\mathbf{I} -\mathbf{W})^{\frac{1}{2}} \mathbf{x}^{t+1}$ and consequently
\begin{align} 
  & \big\langle {\mathbf{v}^*}-{\mathbf{v}^{t+1}},-(\mathbf{I}-\mathbf{W})^\frac{1}{2}\mathbf{x}^{t+1}\big \rangle \nonumber \\
= & \left\langle \mathbf{v}^*-\mathbf{v}^{t+1},\frac{\mathbf{v}^t-\mathbf{v}^{t+1}}{\alpha} \right\rangle \nonumber \\
= & \frac{1}{2\alpha}\left(\|\mathbf{v}^{t+1}-\mathbf{v}^t\|^2-\|\mathbf{v}^*-\mathbf{v}^t\|^2+\|\mathbf{v}^*-\mathbf{v}^{t+1}\|^2\right). \nonumber
\end{align}
Reorganizing the terms, we have
\begin{align}\label{i50}
&\underbrace{\big\langle {\mathbf{v}^*},-(\mathbf{I}-\mathbf{W})^\frac{1}{2}\mathbf{x}^{t+1}\big \rangle}_{(\romannumeral1')}+\underbrace{\big\langle {\mathbf{v}^{t+1}},(\mathbf{I}-\mathbf{W})^\frac{1}{2}\mathbf{x}^{t+1}\big \rangle}_{(\romannumeral2')} \\
&+\frac{1}{2\alpha}\left(\|\mathbf{v}^*-\mathbf{v}^t\|^2-\|\mathbf{v}^*-\mathbf{v}^{t+1}\|^2\right)\nonumber\\
=&\frac{1}{2\alpha}\|\mathbf{v}^{t+1}-\mathbf{v}^t\|^2. \nonumber
\end{align}

Next, we sum up \eqref{a45} and \eqref{i50}. The summation of $(\romannumeral1)$ and $(\romannumeral1')$ can be simplified as
\begin{align}\label{a47}
&f(\mathbf{x}^*)-f(\mathbf{x}^{t+1})+\big\langle \mathbf{v}^*,-(\mathbf{I}-\mathbf{W})^\frac{1}{2}\mathbf{x}^{t+1}\big \rangle  \\
=&\hat{L}(\mathbf{x}^*,\mathbf{v}^*)-\hat{L}(\mathbf{x}^{t+1},\mathbf{v}^*) \leq 0, \nonumber
\end{align}
where $\hat{L}(\mathbf{x},\mathbf{v})=f(\mathbf{x})+\langle \mathbf{v},(\mathbf{I} -\mathbf{W})^{\frac{1}{2}}\mathbf{x} \rangle$ is the Lagrangian of \eqref{c5}. The inequality holds because $(\mathbf{x}^*, \mathbf{v}^*)$ is the saddle point of $\hat{L}(\mathbf{x},\mathbf{v})$. The summation of $(\romannumeral2)$ and $(\romannumeral2')$ is
\begin{align}\label{a48}
&\big\langle \mathbf{x}^*-\mathbf{x}^{t+1},(\mathbf{I}-\mathbf{W})^\frac{1}{2}\mathbf{v}^{t+1}\big\rangle+\big\langle \mathbf{v}^{t+1},(\mathbf{I}-\mathbf{W})^\frac{1}{2}\mathbf{x}^{t+1}\big\rangle \nonumber \\
= & \big\langle \mathbf{x}^*,(\mathbf{I}-\mathbf{W})^\frac{1}{2}\mathbf{v}^{t+1}\big\rangle = 0.
\end{align}
Note that in deriving both \eqref{a47} and \eqref{a48}, we utilize the consensus condition $(\mathbf{I}-\mathbf{W})^\frac{1}{2}\mathbf{x}^*=\mathbf{0}$. With \eqref{a47} and \eqref{a48}, the summation of \eqref{a45} and \eqref{i50} is
\begin{align}\label{a51}
&\frac{1}{2}\left(\|\mathbf{x}^*\!-\!\mathbf{x}^t\|_{\mathbf{Q}}^2-\|\mathbf{x}^*\!-\!\mathbf{x}^{t+1}\|_{\mathbf{Q}}^2\right)\\\nonumber&+\frac{1}{2\alpha}\left(\|\mathbf{v}^*-\mathbf{v}^t\|^2-\|\mathbf{v}^*-\mathbf{v}^{t+1}\|^2\right)\\\nonumber
\ge&\frac{1}{2}\|\mathbf{x}^t\!-\!\mathbf{x}^{t+1}\|_{\mathbf{Q}}^2+\frac{1}{2\alpha}\|\mathbf{v}^{t+1}-\mathbf{v}^t\|^2+\frac{\mu_{f}}{2}\left\|\mathbf{x}^*-\mathbf{x}^{t+1}\right\|^{2}\\\nonumber
+&\left\langle \mathbf{x}^*-\mathbf{x}^{t+1}, \nabla f\left(\mathbf{x}^{t+1}\right)-\nabla f\left(\mathbf{x}^t\right)+ \triangledown^2 f(\mathbf{x}^t) (\mathbf{x}^t-\mathbf{x}^{t+1})  \right\rangle. \nonumber
\end{align}

It is the $\mu_f$-strong convexity of $f$ that brings the quadratic term $\frac{\mu_f}{2}\|\mathbf{x}^*-\mathbf{x}^{t+1}\|^2$ in \eqref{a51}, which enables us to establish the linear convergence. Indeed, by Cauchy-Schwarz inequality, for any $\theta > 0$ we have
\begin{align}\label{a52-tttt}
&\left\langle \mathbf{x}^*-\mathbf{x}^{t+1}, \nabla f\left(\mathbf{x}^{t+1}\right)-\nabla f\left(\mathbf{x}^t\right)+ \triangledown^2 f(\mathbf{x}^t) (\mathbf{x}^t-\mathbf{x}^{t+1})  \right\rangle \nonumber\\
&\geq -\frac{1}{\theta}\|\nabla f\left(\mathbf{x}^{t+1}\right)-\nabla f\left(\mathbf{x}^t\right)+ \triangledown^2 f(\mathbf{x}^t) (\mathbf{x}^t-\mathbf{x}^{t+1})\|^2 \nonumber\\
&\quad -\theta\|\mathbf{x}^*-\mathbf{x}^{t+1}\|^2.
\end{align}	
By Lipschitz continuity of $\nabla f$, it holds
\begin{align}\label{fadfasdfsdddd}
&-\frac{1}{\theta}\|\nabla f\left(\mathbf{x}^{t+1}\right)-\nabla f\left(\mathbf{x}^t\right)+ \triangledown^2 f(\mathbf{x}^t) (\mathbf{x}^t-\mathbf{x}^{t+1})\|^2 \nonumber\\
\geq &-\frac{2}{\theta}\|\nabla f\left(\mathbf{x}^{t+1}\right)-\nabla f\left(\mathbf{x}^t\right)\|^2-\frac{2}{\theta}\|\triangledown^2 f(\mathbf{x}^t) (\mathbf{x}^t-\mathbf{x}^{t+1})\|^2 \nonumber \\
\geq & - \frac{4L_f^2}{\theta}\|\mathbf{x}^t-\mathbf{x}^{t+1}\|^2.
\end{align}
Thus, combining \eqref{a52-tttt} and \eqref{fadfasdfsdddd} yields
\begin{align}\label{a52}
&\left\langle \mathbf{x}^*-\mathbf{x}^{t+1}, \nabla f\left(\mathbf{x}^{t+1}\right)-\nabla f\left(\mathbf{x}^t\right)+ \triangledown^2 f(\mathbf{x}^t) (\mathbf{x}^t-\mathbf{x}^{t+1})  \right\rangle \nonumber\\
&\geq -\theta\|\mathbf{x}^*-\mathbf{x}^{t+1}\|^2- \frac{4L_f^2}{\theta}\|\mathbf{x}^t-\mathbf{x}^{t+1}\|^2.
\end{align}	
substituting \eqref{a52} into \eqref{a51}, we obtain
\begin{align}\label{a53}
&\|\mathbf{x}^*\!-\!\mathbf{x}^t\|_{\mathbf{Q}}^2-\|\mathbf{x}^*\!-\!\mathbf{x}^{t+1}\|_{\mathbf{Q}}^2\\\nonumber&+\frac{1}{\alpha}\left(\|\mathbf{v}^*-\mathbf{v}^t\|^2-\|\mathbf{v}^*-\mathbf{v}^{t+1}\|^2\right)\\\nonumber
\ge&\|\mathbf{x}^t\!-\!\mathbf{x}^{t+1}\|_{\mathbf{Q}}^2+\frac{1}{\alpha}\|\mathbf{v}^{t+1}-\mathbf{v}^t\|^2+{\mu_{f}}\left\|\mathbf{x}^*-\mathbf{x}^{t+1}\right\|^{2}\\\nonumber
&-\theta\|\mathbf{x}^*-\mathbf{x}^{t+1}\|^2-\frac{4L_f^2}{\theta}\|\mathbf{x}^t-\mathbf{x}^{t+1}\|^2\\\nonumber
=&\|\mathbf{x}^t\!-\!\mathbf{x}^{t+1}\|_{({\mathbf{Q}}-\frac{4L_f^2}{\theta}\mathbf{I})}^2+\frac{1}{\alpha}\|\mathbf{v}^{t+1}-\mathbf{v}^t\|^2\\\nonumber
&+({\mu_{f}-\theta})\left\|\mathbf{x}^*-\mathbf{x}^{t+1}\right\|^{2}. \nonumber
\end{align}

\textbf{Step 3.} To prove the linear convergence, the parameters in \eqref{a53} are required to satisfy
\begin{equation}\label{a54}
\left\{\begin{aligned}
&{\lambda_{\min }({\mathbf{Q}})}-\frac{4L_f^2}{\theta}>  0, \\
& \mu_f-\theta>0. \\
\end{aligned}\right.
\end{equation}
Hence, \eqref{a54} is attainable when
\begin{equation} \label{a55}
\delta\triangleq 1-\frac{4L_f^2}{\mu_f {\lambda_{\min }({\mathbf{Q}})} } > 0,
\end{equation}
which holds since ${\lambda_{\min }({\mathbf{Q}})}  = \epsilon-\alpha \lambda_{\max}(\mathbf{I}-\mathbf{W}) > \frac{4L_f^2}{\mu_f}$ by hypothesis.

When $\delta>0$, then \eqref{a54} holds true if we choose $\theta=\frac{\mu_f}{1+\delta}$. Substituting this specific $\theta$ and the definition of $\delta$, we can rewrite \eqref{a53} to
\begin{align}\label{a56}
&\|\mathbf{x}^*\!-\!\mathbf{x}^t\|_{\mathbf{Q}}^2-\|\mathbf{x}^*\!-\!\mathbf{x}^{t+1}\|_{\mathbf{Q}}^2 \\\nonumber&+\frac{1}{\alpha}\left(\|\mathbf{v}^*-\mathbf{v}^t\|^2-\|\mathbf{v}^*-\mathbf{v}^{t+1}\|^2\right)\\\nonumber
\ge& \delta^2 {\lambda_{\min }({\mathbf{Q}})}   \|\mathbf{x}^t\!-\!\mathbf{x}^{t+1}\|^2+\frac{1}{\alpha}\|\mathbf{v}^{t+1}-\mathbf{v}^t\|^2\\\nonumber
&+\frac{\mu_f \delta}{1+\delta}\left\|\mathbf{x}^*-\mathbf{x}^{t+1}\right\|^{2}. \nonumber
\end{align}

To establish the linear convergence in \eqref{26}, we need to show that $ \left\|\mathbf{\zeta}^t-\mathbf{\zeta}^*\right\|_{\mathbf{G}}^{2}- \left\|\mathbf{\zeta}^{t+1}-\mathbf{\zeta}^*\right\|_{\mathbf{G}}^{2}\ge \delta^{\prime}\left\|\mathbf{\zeta}^{t+1}-\mathbf{\zeta}^*\right\|_{\mathbf{G}}^{2}$.
Given \eqref{a56}, it is enough to show that
\begin{align}\label{a57}
&\frac{ \delta^{\prime}}{\alpha}\left\|\mathbf{v}^{t+1}-\mathbf{v}^*\right\|^{2}+ \delta^{\prime} \left\|\mathbf{x}^{t+1}-\mathbf{x}^*\right\|_{\mathbf{Q}}^{2} \\ \nonumber
\leq&\delta^2 {\lambda_{\min }({\mathbf{Q}})} \|\mathbf{x}^t\!-\!\mathbf{x}^{t+1}\|^2+\frac{1}{\alpha}\|\mathbf{v}^{t+1}-\mathbf{v}^t\|^2 \\ \nonumber
& +\frac{\mu_f \delta}{1+\delta}\left\|\mathbf{x}^*-\mathbf{x}^{t+1}\right\|^{2}.
\end{align}

We proceed to find an upper bound for $\left\|\mathbf{v}^{t+1}-\mathbf{v}^*\right\|^{2}$ in terms of the summands at the right-hand side
of \eqref{a57}. For $\nabla f\left(\mathbf{x}^{t+1}\right) -\nabla f\left(\mathbf{x}^*\right)+(\mathbf{I}-\mathbf{W})^{\frac{1}{2}}\left(\mathbf{v}^{t+1}-\mathbf{v}^*\right)
+\epsilon\left(\mathbf{x}^{t+1}-\mathbf{x}^t\right)+\mathbf{e}^t=\mathbf{0}$ in \eqref{7}, we utilize Cauchy-Schwarz inequality twice to obtain
\begin{align}\label{a58-tttt}
&\left\|\mathbf{v}^{t+1}-\mathbf{v}^*\right\|_{\mathbf{I}-\mathbf{W}}^{2} \leq \frac{{\beta} \epsilon^2}{\beta-1}\|\mathbf{x}^{t+1}-\mathbf{x}^t\|^2 \\\nonumber
& \hspace{4em} +\beta\phi\| \nabla f\left(\mathbf{x}^{t+1}\right) -\nabla f\left(\mathbf{x}^*\right) \|^2+\frac{\beta {\phi}}{\phi-1}\|\mathbf{e}^t\|^2,\nonumber
\end{align}
where $\beta>1$ and $\phi>1$ are parameters introduced in using Cauchy-Schwarz inequality. By Lipschitz continuity of $\nabla f$, it holds that $\| \nabla f\left(\mathbf{x}^{t+1}\right) -\nabla f\left(\mathbf{x}^*\right) \|^2 \leq L_f^2 \| \mathbf{x}^{t+1} - \mathbf{x}^* \|^2$. By \eqref{25}, we have $\|\mathbf{e}^t\|^2 \leq \kappa^2 \left\|\mathbf{x}^{t+1}-\mathbf{x}^t\right\|^2$. Therefore, \eqref{a58-tttt} implies that
\begin{align*}
&\left\|\mathbf{v}^{t+1}-\mathbf{v}^*\right\|_{\mathbf{I}-\mathbf{W}}^{2} \\
\leq & \left( \frac{{\beta} \epsilon^2}{\beta-1} + \frac{\beta \phi \kappa^2}{\phi-1} \right) \|\mathbf{x}^{t+1}-\mathbf{x}^t\|^2 + \beta\phi L_f^2\| \mathbf{x}^{t+1} - \mathbf{x}^* \|^2. \nonumber
\end{align*}
Further, considering that $\mathbf{v}^{t+1}$ and $\mathbf{v}^*$ both lie in the column space of $(\mathbf{I}-\mathbf{W})^{\frac{1}{2}}$, we have
\begin{align}\label{a58}
&\left\|\mathbf{v}^{t+1}-\mathbf{v}^*\right\|^{2} \leq \frac{1}{\hat{\lambda}_{\min}(\mathbf{I}-\mathbf{W})} \\\nonumber
& \left\{ \left( \frac{{\beta} \epsilon^2}{\beta-1} + \frac{\beta \phi \kappa^2}{\phi-1} \right) \|\mathbf{x}^{t+1}-\mathbf{x}^t\|^2 + \beta\phi L_f^2\| \mathbf{x}^{t+1} - \mathbf{x}^* \|^2 \right\}.\nonumber
\end{align}
Note that $\hat{\lambda}_{\min}(\mathbf{I}-\mathbf{W})>0$ because $ \mathbf{I}-\mathbf{W} \succeq 0$. We also find an upper bound for $\left\|\mathbf{x}^{t+1}-\mathbf{x}^*\right\|_{\mathbf{Q}}^{2}$ as
\begin{align}\label{faeftrewes}
\left\|\mathbf{x}^{t+1}-\mathbf{x}^*\right\|_{\mathbf{Q}}^{2} \leq \lambda_{\max}(\mathbf{Q}) \left\|\mathbf{x}^{t+1}-\mathbf{x}^*\right\|^{2}.
\end{align}

By substituting the upper bounds in \eqref{a58} and \eqref{faeftrewes} into \eqref{a57}, we obtain a sufficient condition for \eqref{26}, given by
\begin{align}\label{a59}
&\lambda_{\max}(\mathbf{Q}) {\delta}^{\prime} \left\|\mathbf{x}^{t+1}-\mathbf{x}^*\right\|^{2} + \frac{{\delta}^{\prime}}{\alpha\hat{\lambda}_{\min}(\mathbf{I}-\mathbf{W})}\!\nonumber\\\nonumber&\left\{\! \left( \frac{{\beta} \epsilon^2}{\beta-1} + \frac{\beta \phi \kappa^2}{\phi-1} \right) \|\mathbf{x}^{t+1}-\mathbf{x}^t\|^2 + \beta\phi L_f^2\| \mathbf{x}^{t+1} - \mathbf{x}^* \|^2\right\}\! \\
\leq& \delta^2{\lambda_{\min }({\mathbf{Q}})} \|\mathbf{x}^t\!-\!\mathbf{x}^{t+1}\|^2+\frac{1}{\alpha}\|\mathbf{v}^{t+1}-\mathbf{v}^t\|^2 \nonumber \\
&+\frac{\mu_f \delta}{1+\delta}\left\|\mathbf{x}^*-\mathbf{x}^{t+1}\right\|^{2}.
\end{align}
Regrouping the terms, we know that \eqref{a59} is equivalent to
\begin{align}\label{a60}
&\left(\frac{\mu_f\delta}{1+\delta}-{\delta }^{\prime}\lambda_{\max}({\mathbf{Q}})-\frac{{\delta }^{\prime}\beta\phi L_f^2}{\alpha \hat{\lambda}_{\min}(\mathbf{I}-\mathbf{W})}\right)\left\|\mathbf{x}^{t+1}-\mathbf{x}^*\right\|^2 \nonumber \\\nonumber
&+\left(\delta^2\lambda_{\min}(\mathbf{Q})-\frac{{\delta}^{\prime }\beta\epsilon^2/(\beta-1)}{{\alpha \hat{\lambda}_{\min}(\mathbf{I}-\mathbf{W})}} -\frac{{\delta}^{\prime}\beta\phi \kappa^2/(\phi -1)}{{\alpha \hat{\lambda}_{\min}(\mathbf{I}-\mathbf{W})}}\right) \\
& \quad \left\|\mathbf{x}^{t+1}-\mathbf{x}^t\right\|^{2}
+\frac{1}{\alpha}\left\|\mathbf{v}^{t+1}-\mathbf{v}^t\right\|^{2}\ge 0.
\end{align}

Recall that if \eqref{a60} is satisfied, then \eqref{a59} holds, and hence \eqref{a57} and \eqref{26} are also true. To get \eqref{a60}, we need to make sure that the coefficients in \eqref{a60} are non-negative. Thus, \eqref{a60} holds if ${\delta}^{\prime}$ satisfies
\begin{align}\label{d65}
{\delta}^{\prime} \leq \min\left\{\frac{\mu_f\delta}{(1+\delta) \left[\lambda_{\max}(\mathbf{Q})+\frac{\beta\phi L_f^2}{{\alpha \hat{\lambda}_{\min}(\mathbf{I}-\mathbf{W})}} \right]},\right.\\\left.\frac{\alpha\delta^2 \lambda_{\min}(\mathbf{Q}) { \hat{\lambda}_{\min}(\mathbf{I}-\mathbf{W})}}{\frac{\beta\epsilon^2}{(\beta-1)}+\frac{\beta\phi\kappa^2}{(\phi-1)}}\right\}.\nonumber
\end{align}

By the definition of $\mathbf{Q}=\epsilon \mathbf{I} -\alpha(\mathbf{I}-\mathbf{W})$, we have
\begin{align}
&{\lambda_{\min }({\mathbf{Q}})} = \epsilon-\alpha \lambda_{\max}(\mathbf{I}-\mathbf{W}) > \frac{4L_f^2}{\mu_f}>0, \nonumber \\
&{\lambda_{\max }({\mathbf{Q}})} = \epsilon-\alpha \lambda_{\min}(\mathbf{I}-\mathbf{W}) = \epsilon >0. \nonumber
\end{align}
Substituting these connections and the definition of $\kappa=2L_f$ $+\alpha \lambda_{\max}(\mathbf{I}-\mathbf{W})$ to \eqref{d65}, we eventually find the largest ${\delta}^{\prime}$ to satisfy \eqref{d65}, as in \eqref{xx}.
\end{proof}

\autoref{theom1} establishes the linear convergence of sequence $\left\|\mathbf{\zeta}^t-\mathbf{\zeta}^*\right\|_{\mathbf{G}}^{2}$, where the factor
of linear convergence is $\frac{1}{1 + {\delta}^{\prime}}$. When $\lambda_{\max}(\mathbf{I}-\mathbf{W})$ increases, $\delta$ monotonically decreases and $\delta'$ monotonically decreases. On the other hand, when $\hat{\lambda}_{\min}(\mathbf{I}-\mathbf{W})$ increases, $\delta'$ also monotonically increase. These observations indicate the impact of network topology on the convergence speed. Since $\mathbf{Q}$ is positive definite under the parameter setting, $\mathbf{G}$ is also positive definite such that $\mathbf{x}^t$ converges linearly to $\mathbf{x}^*$.

\section{Numerical Experiments}
We consider the application of Newton tracking for solving a decentralized logistic regression problem in the form of
\begin{align}\nonumber
{{x}}^{*} = \underset{{x} \in \mathbb{R}^{p}}{\operatorname{argmin}} ~ \frac{\rho}{2}\|{x}\|^{2}+\sum_{i=1}^{n} \sum_{j=1}^{m_{i}} \ln \left(1+\exp \left(-\left(\mathbf{o}_{i j}^{T} {x}\right) \mathbf{p}_{i j}\right)\right),
\end{align}
where node $i$ has access to $m_i$ training samples $(\mathbf{o}_{i j}, \mathbf{p}_{i j}) \in \mathbb{R}^{p} \times\{-1,+1\}$; $j=1, \ldots, m_{i}$. We add a regularization term $\frac{\rho}{2}\|{x}\|^{2}$ with $\rho>0$ to the loss function for avoiding overfitting. In the numerical experiments, we randomly generate the elements in $\mathbf{o}_{i j}$ following the normal distribution and the elements in $\mathbf{p}_{i j}$ following the uniform distribution. We randomly generate $\frac{\tau n(n-1)}{2}$ undirected edges for the network of $n$ nodes, where $\tau \in(0,1] $ is the connectivity ratio, while guarantee that the network is connected.

To evaluate performance of the compared algorithms, the optimal logistic classifier $x^*$ is pre-computed through centralized gradient descent. The performance metric is relative error, defined as $\left\|\mathbf{x}^{t}-\mathbf{x}^{*}\right\| /\left\|\mathbf{x}^{0}-\mathbf{x}^{*}\right\|$. We conducted the experiments with Matlab R2016b, running on a laptop with Intel(R) Core(TM) i7 CPU@1.80GHz, 16.0 GB of RAM, and Windows 10 operating system.

\subsection{Comparison with Second-order Methods}\label{exp1}
We compare Newton tracking with second-order algorithms including NN-$K$ \cite{mokhtari2016network}, ESOM-$K$ \cite{mokhtari2016decentralized} and DQM \cite{mokhtari2016dqm}. In every iteration of NN-$K$ and ESOM-$K$, the nodes need to execute a $K+1$-round inner loop to compute the inverse of a topology-dependent matrix in the forms of $\alpha\triangledown^2 f(\mathbf{x})+ (\mathbf{I} -\mathbf{W})$ and $\triangledown^2 f(\mathbf{x})+\alpha (\mathbf{I} -\mathbf{W})+\epsilon \mathbf{I}$, respectively.


In the first experiment, we set the number of nodes as $n = 10$ and the connectivity ratio as  $\tau=0.5$. Each node holds $12$ samples, i.e., $m_i = 12$, for all $i$. The dimension of sample vectors $\mathbf{o}_{ij}$ is $p =8$. We set the regularization parameter $\rho=0.001$.

We run NN-$K$, ESOM-$K$, and DQM with fixed hand-optimized step sizes. The step sizes of DQM is $\alpha=0.3$. The parameters of ESOM-0, ESOM-1 and ESOM-2 are $\alpha=3.3$ and $\epsilon=3$. For NN-$K$, a smaller step size improves accuracy but leads to slow convergence, while a larger step size accelerates the convergence at the cost of low accuracy. Therefore, for NN-0, NN-1 and NN-2 we set
$\alpha = 0.001$, $\alpha = 0.008$, and $\alpha = 0.02$, respectively. For Newton tracking, we set the parameters the same as ESOM, i.e., $\alpha=3.3$ and $\epsilon=3$.
\begin{figure}
	\centering
	\centerline{\includegraphics[width=8cm]{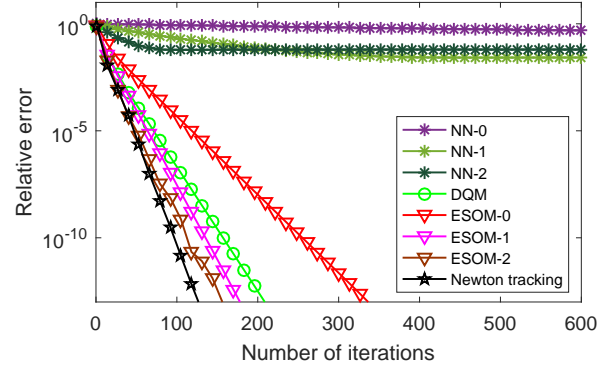}}
	\caption{Relative errors of Newton tracking,  DQM, NN-$K$, and ESOM-$K$ versus number of iterations.}
    \label{fig1}
\end{figure}

\begin{figure}
	\centering
	\centerline{\includegraphics[width=8cm]{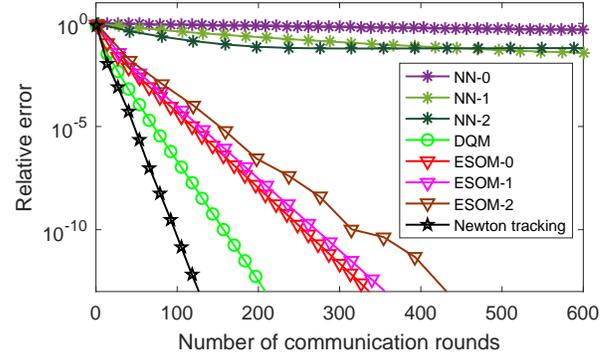}}
	\caption{Relative errors of Newton tracking,  NN-$K$, ESOM-$K$ and DQM versus rounds of communications.}
	\label{fig2}
\end{figure}

Fig. \ref{fig1} illustrates the relative error versus the number of iterations. Observe that NN-$K$ converges to the neighborhoods of optimal argument. Among the exact decentralized algorithms, the proposed Newton tracking algorithm has the best performance compared with the other algorithms and converges linearly, which validates the theoretical result in \autoref{theom1}.

Newton tracking and DQM require one round of communication per iteration. For other algorithms, NN-$K$ and ESOM-$K$ require $K + 1$ rounds. Fig. \ref{fig2} illustrates the relative error versus the rounds of communication. Observe that although ESOM-1 and ESOM-2 perform well as depicted in Fig. \ref{fig1}, they become worse in Fig. \ref{fig2}  because more rounds of communication are required in each iteration. In terms of the communication cost, the proposed Newton tracking algorithm is still the best.

\subsection{Comparison with First-order Methods}

\begin{figure}[t]
	\centering
	
	\subfigure{
		\includegraphics[width=8cm] {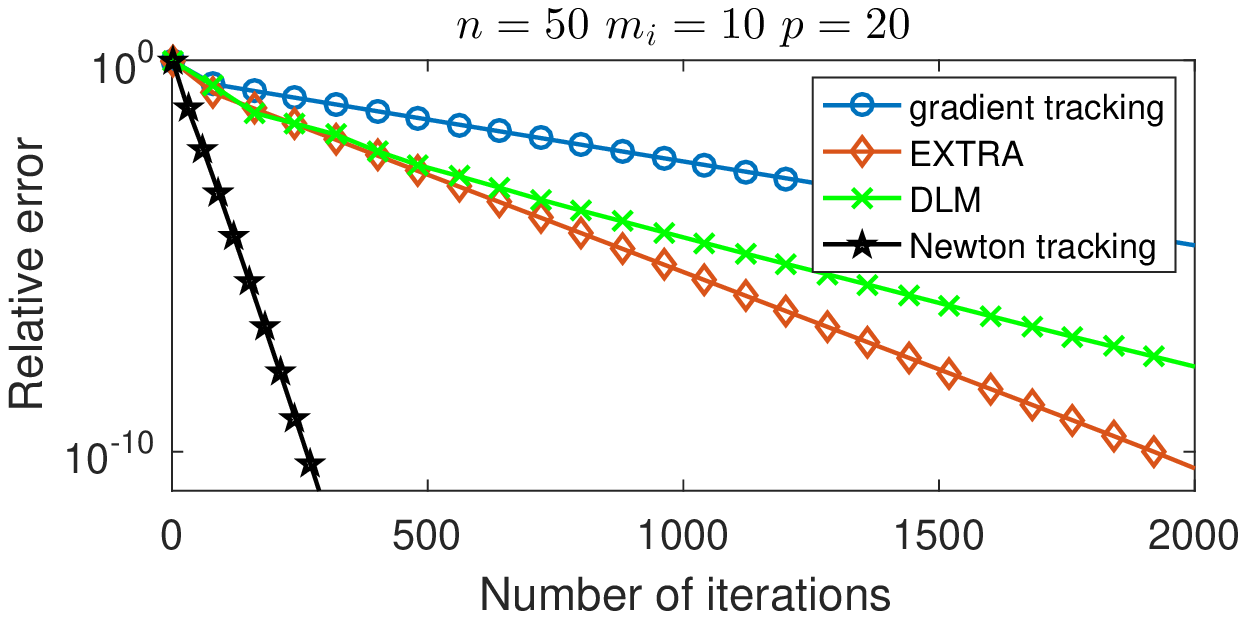}
		\label{fig:subfig1}
	}
	
	\subfigure{
		\includegraphics[width=8cm] {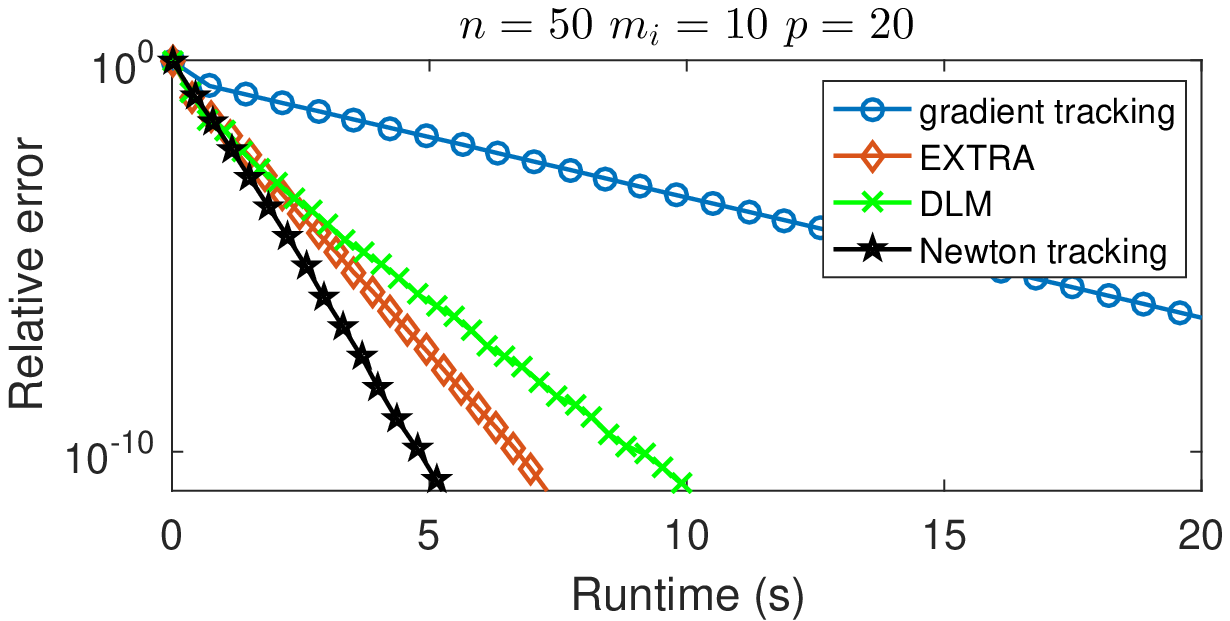}
		\label{fig:subfig2}
	}
		\caption{Relative errors of Newton tracking, gradient tracking, EXTRA and DLM when $n=50$, $m_i=10$ and $p=20$.}
		\label{fig50}
\end{figure}

\begin{figure}	
\centering
		
	\subfigure{
		\includegraphics[width=8cm] {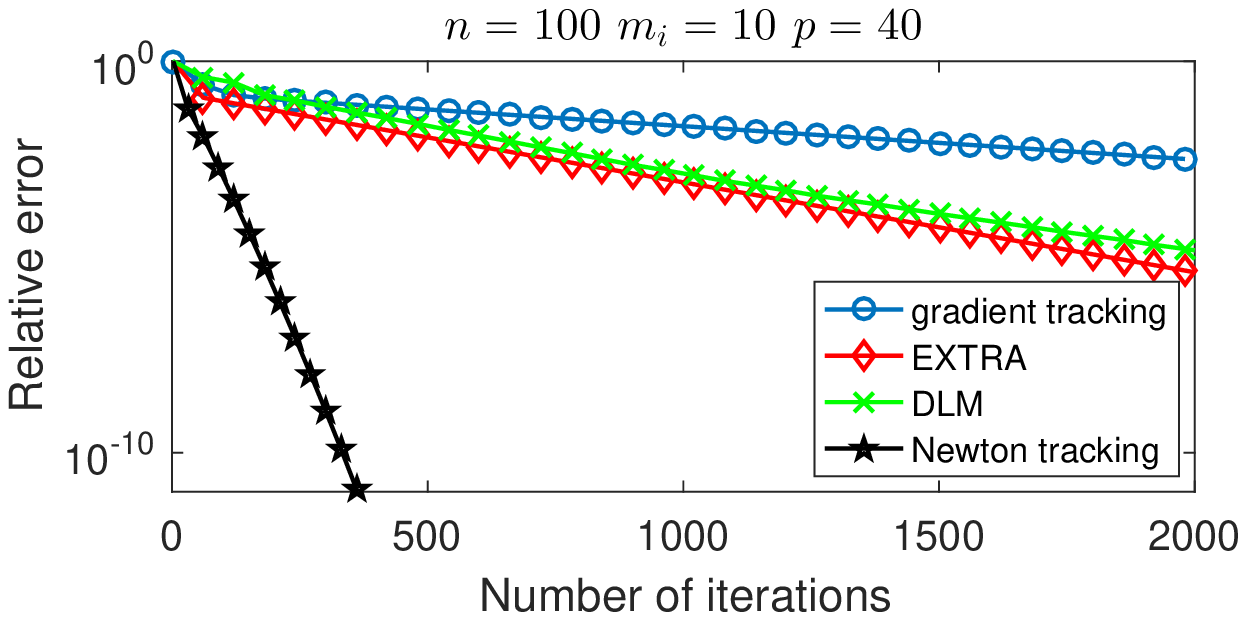}
		\label{fig:subfig3}
	}
	
		\subfigure{
		\includegraphics[width=8cm] {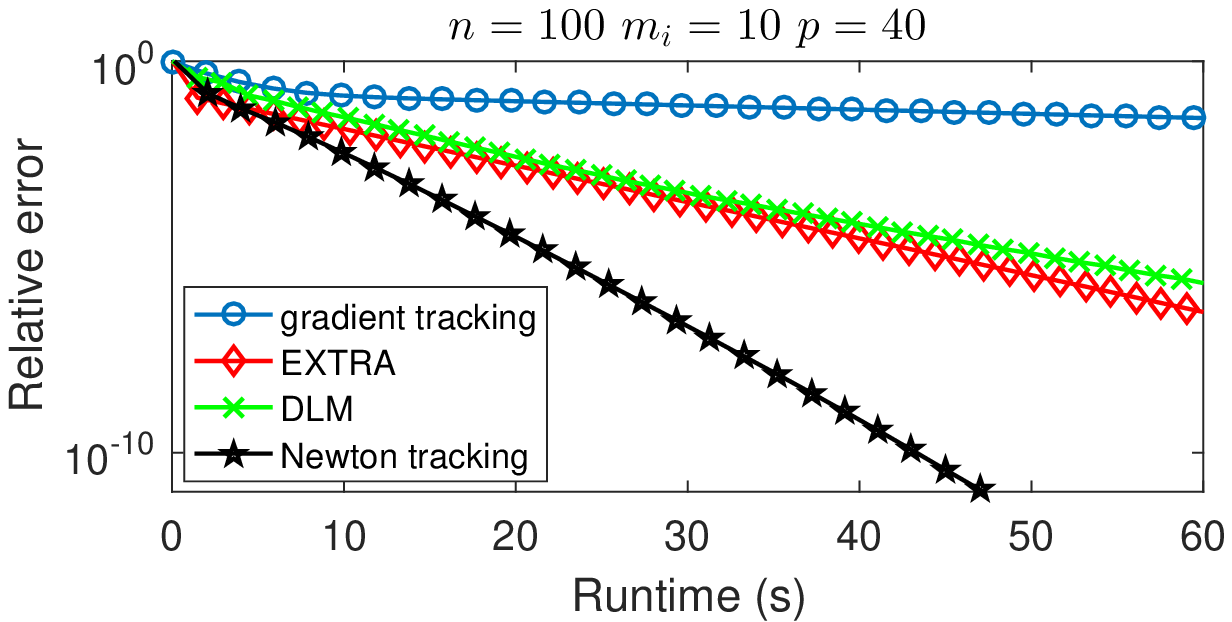}
		\label{fig:subfig3}
	}
	\caption{Relative errors of Newton tracking, gradient tracking, EXTRA and DLM when $n=100$, $m_i=10$ and $p=40$.}
	\label{fig100}
\end{figure}
We compare Newton tracking with the first-order  algorithms, including gradient tracking \cite{qu2017harnessing}, EXTRA \cite{shi2015extra} and DLM \cite{ling2015dlm}. We respectively rewrite these three algorithms as their equivalent updates:  (gradient tracking) $\mathbf{x}^{t+2}=2\mathbf{W} \mathbf{x}^{t+1}-\mathbf{W}^{2}\mathbf{x}^{t}-\alpha\left[\nabla f\left(\mathbf{x}^{t+1}\right)-\nabla f\left(\mathbf{x}^{t}\right)\right] $, (EXTRA) $
\mathbf{x}^{t+2}=(\mathbf{I}+\mathbf{W}) \mathbf{x}^{t+1}-(\mathbf{I}+\mathbf{W})\mathbf{x}^{t}/2-\alpha\left[\nabla f\left(\mathbf{x}^{t+1}\right)-\nabla f\left(\mathbf{x}^{t}\right)\right]
$
and (DLM) $
\mathbf{x}^{t+2}=\left(\mathbf{I}-\alpha DL_o\right)( 2\mathbf{x}^{t+1}-\mathbf{x}^{t})- D\left[\nabla f\left(\mathbf{x}^{t+1}\right)-\nabla f\left(\mathbf{x}^{t}\right)\right]
$, where $D=\text{diag}\{1/(2\alpha d_i + \epsilon)\}$, and $d_i$ is the degree of node $i$. $L_o$ is the oriented Laplacian defined in \cite{ling2015dlm}.  

In this section we conduct the experiments under two larger networks. In the second (third) experiments we set the number of nodes as $n = 50 \; (100)$, the number of samples on each agent as $m_i=10 \;(10)$ for all $i$ and the dimension of sample vectors as $p =20\; (40)$. The other settings are the same as \autoref{exp1}. 

We run all the algorithms with fixed hand-optimized step sizes. The step sizes of gradient tracking and EXTRA in the second (third) experiments are $\alpha=0.16\; (0.6)$ and $\alpha=0.07 \;(1.6)$, respectively. The parameters of DLM in the second (third) experiments are $\alpha=0.1 \;(0.008)$ and $\epsilon=0.1 \;(0.001)$.
For Newton tracking, the parameters in the second (third) experiments are $\alpha=1.1 \;(0.08)$ and $\epsilon=1.2 \;(0.08)$.

Fig. \ref{fig50} illustrates the relative error versus the number of iterations and runtime, respectively. Observe that the proposed Newton tracking outperforms the first-order algorithms in terms of either the number of iterations or runtime. Although Newton tracking computes the inverse of estimated Hessian $\nabla^2 f_i (x_i) + \epsilon I_p \in \mathbb{R}^{p\times p}$ in each iteration, it calls for relatively smaller number of iterations compared with the first-order algorithms, which leads to the shorter runtime. We get similar results in the third experiments, see Fig. \ref{fig100}. 

\subsection{Effect of Network Topology}
This section investigates the performance of Netwon tracking in four different topologies  including line graph, cycle graph, random graphs with $\tau =\{0.3, 0.5,0.7\}$, and complete graph. The parameters of Newton tracking are set as $\alpha=2.3$ and $\epsilon=2.4$. All the other settings are the same as those in \autoref{exp1}.

\begin{figure}
	\centering
	\centerline{\includegraphics[width=8cm]{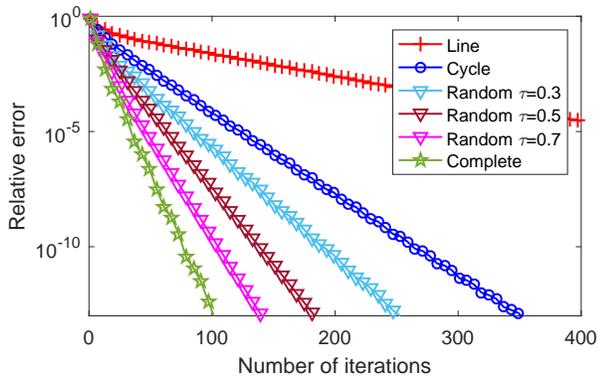}}
	\caption{Relative errors of Newton tracking versus number of iterations for line graph, cycle graph, random graphs with $\tau =\{0.3, 0.5,0.7\}$, and complete graph.}
	\label{fig3}
\end{figure}

Fig. \ref{fig3} illustrates the relative errors versus the number of iterations. Observe that the proposed Newton tracking algorithm has linear convergence rates in all types of graphs. Among them, complete graph yields the fastest speed. This observation confirms the convergence rate developed in \autoref{theom1}. To be specific, for line graph, cycle graph, random graphs with $\tau=\{0.3, 0.5, 0.7\}$, and complete graph, we have $\hat{\lambda}_{\min }(\mathbf{I}-\mathbf{W})=\{0.03,0.12,0.17,0.34, 0.43,1.00\}$ and ${\lambda}_{\max}(\mathbf{I}-\mathbf{W})=\{1.30,1.33,1.16,1.15,1.10,1.00 \}$, respectively. According to the definition of $\delta^{\prime}$ in \eqref{d65}, the complete graph with the largest $\hat{\lambda}_{\min }(\mathbf{I}-\mathbf{W})$ and the smallest ${\lambda}_{\max }(\mathbf{I}-\mathbf{W})$ has the largest $\delta^{\prime}$, and hence the fastest convergence speed.



\section{CONCLUSIONS}
This paper proposed a novel Newton tracking algorithm to solve the decentralized consensus optimization problem. Each node updates its local variable along a modified local Newton direction, which is calculated with neighboring and historical information. Newton tracking employs a fixed step size and, yet, converges to an exact solution. The connections between Newton tracking and several existing methods, including gradient tracking and second-order algorithms were investigated. We proved that the proposed algorithm converges at a linear rate under the strongly convex assumption. Numerical experiments demonstrated the efficacy of Newton tracking, compared with existing algorithms such as gradient tracking, NN, ESOM, and DQM.

\bibliographystyle{IEEEtran}
\bibliography{IEEEabrv_PnADMM}

%
%
%
%
\addtolength{\textheight}{-3cm}   
\end{document}